\newcommand{\url}[1]{#1} 
\definecolor{gray}{rgb}{0.2,0.2,.2}
\definecolor{colorBlue}{rgb}{0.,.0,0.825}
\definecolor{colorGreen}{rgb}{0.,0.75,.0}
\definecolor{colorRed}{rgb}{0.99,0.,.0}
\DeclareMathOperator{\sgn}{\mathrm{sgn}}
\newcommand{\Rset}{{\mathbb{R}}}
\newcommand{\ocinterval}[2]{(#1,\,#2]}%
\newcommand{\cointerval}[2]{[#1,\,#2)}%
\newcommand{\oointerval}[2]{(#1,\,#2)}%
\newcommand{\ccinterval}[2]{[#1,\,#2]}%
\newcommand{\ini}{{\rm ini}}
\newlength{\mhpicDwidth}
\newlength{\mhpicDvsep}
\newlength{\mhpicDhsep}
\newlength{\mhpicPwidth}
\newlength{\mhpicPvsep}
\newlength{\mhpicPhsep}
\newlength{\mhpicWhsep}
\newcommand{\pair}[2]{{\left({#1},\,{#2}\right)}}
\newcommand{\bpair}[2]{{\big({#1},\,{#2}\big)}}
\newcommand{\at}[1]{{\left({#1}\right)}}
\newcommand{\nat}[1]{(#1)}
\newcommand{\bat}[1]{{\big(#1\big)}}
\newcommand{\Bat}[1]{{\Big(#1\Big)}}
\newcommand{\ul}[1]{\underline{#1}}
\newcommand{\D}{\displaystyle}
\newcommand{\bigpar}{\par\quad\newline\noindent}
\newcommand{\bjump}[1]{{\big|\!\big[#1\big]\!\big|}}
\newcommand{\abs}[1]{\left|{#1}\right|}
\newcommand{\nabs}[1]{|{#1}|}
\newcommand{\dint}[1]{\,\mathrm{d}#1}
\newcommand{\al}{{\alpha}}
\newcommand{\be}{{\beta}}
\newcommand{\ka}{{\kappa}}
\newcommand{\la}{{\lambda}}
\newcommand{\si}{{\sigma}}
\theoremstyle{plain}
\newtheorem{theorem}             {Theorem}[]
\newtheorem{lemma}      [theorem]{Lemma}
\newtheorem*{result*}{Main result}
\theoremstyle{definition}
\newtheorem*{remarks*}{Remarks}
\newtheorem*{remark*}{Remark}
\begin{document}
%
%
%
%
\title{Traveling phase interfaces in viscous \\ forward-backward diffusion equations}
\date{\today}
\author{%
Carina Geldhauser\and
Michael Herrmann\and
Dirk Jan{\ss}en
 }
\maketitle
%
%
%
\begin{abstract}
The viscous regularization of an ill-posed diffusion equation with bistable nonlinearity predicts a hysteretic behavior of dynamical phase transitions but a complete mathematical \mbox{understanding} of the intricate multiscale evolution is still missing. We shed light on the fine structure of \mbox{propagating} phase boundaries by carefully examining traveling wave solutions in a special case. Assuming a trilinear constitutive relation we characterize all waves that possess a monotone \mbox{profile} and connect the two phases by a single interface of positive width. We further study the two sharp-interface regimes related to either vanishing viscosity or the bilinear limit.
\end{abstract}
%
%
%
\quad\newline\noindent%
\begin{minipage}[t]{0.15\textwidth}%
Keywords:
\end{minipage}%
\begin{minipage}[t]{0.8\textwidth}%
\emph{hysteretic phase interfaces, ill-posed diffusion equations, \\viscous regularization, traveling waves in piecewise linear systems}
\end{minipage}%
\medskip
\newline\noindent
\begin{minipage}[t]{0.15\textwidth}%
MSC (2010): %
\end{minipage}%
\begin{minipage}[t]{0.8\textwidth}%
35R25,  
35C07,  
35R35,  
74N30 
\end{minipage}%
%
%
%
%
%
\section{Introduction}\label{secIntro}
%
%
This work concerns special solutions to the viscous diffusion equation
\begin{align}
\label{PDE}
\partial_t u\pair{t}{x}-\nu^2\partial_x^2\partial_t u\pair{t}{x}=\partial_x^2
\Phi^\prime\bat{u\pair{t}{x}}
\end{align}
with small \emph{viscosity} parameter $\nu>0$, which can be regarded as a singular perturbation of the nonlinear diffusion equation
\begin{align}
\label{bulk}
\partial_t u\pair{t}{x}&=\partial_x^2
\Phi^\prime\bat{u\pair{t}{x}} \,.
\end{align}
We are solely interested in the bistable case, in which $\Phi^\prime$ is the derivative of a double-well potential and admits two increasing (or \emph{stable}) branches  that are separated by a decreasing (or \emph{unstable}) one. Such nonlinearities arise in the context of phase transitions while the monostable case has applications in image processing and population dynamics, see for instance \cite{Ell85,PM90,HPO04,Pad98}. It has been shown in \cite{NCP91} that the initial value problem to \eqref{PDE} is well-posed for any $\nu>0$ and that there exists a plethora of stable and unstable steady states. The limit PDE \eqref{bulk}, however, is ill-posed and often called a \emph{forward-backward parabolic equation}. Besides of \eqref{PDE}, many other regularizations of \eqref{bulk} have also been studied such as the lattice approximation and the Cahn-Hilliard equation given in \eqref{lattice}  and \eqref{CahnHilliard} below.
\par
From a mathematical point of view it is very natural to study the vanishing viscosity limit $\nu\to0$ of solutions to \eqref{PDE}, to characterize all possible limit functions, and to understand in which sense they satisfy the nonlinear equation \eqref{bulk}.  Of particular interest is the sharp-interface limit in which there exist finitely many interface curves  which separate regions in which $u$ takes values in either one of the convex components of $\Phi$. The effective dynamics for $\nu=0$ is then governed by a free boundary problem,  that complements the bulk diffusion \eqref{bulk} between any two interfaces with certain conditions on the interfaces. These equations are stated below in $\eqref{lmSC}$+$\eqref{lmFR}$ for the nonlinearity \eqref{trilin}. 
\par
For the viscous approximation we still have no complete theory for the passage to the limit $\nu\to0$ although there exist important partial results as explained below in greater detail. It has already been observed in \cite{Plo94,EP04} that the sharp-interface limit of the viscous regularization \eqref{PDE} is quite involved since the interface value of $\Phi^\prime\at{u}$ depends in a hysteretic manner on both the instantaneous state of the system and the current propagation direction of the interface. In particular, in the viscous case we find both standing and moving phase interfaces but numerical simulations as well as heuristic arguments indicate that the dynamics of the latter is rather complicated and related to several time and space scales.
\par
In this paper we prove the existence and uniqueness of monotone traveling waves solutions to \eqref{PDE} in the case that $\Phi^\prime$ is a trilinear function. These special solutions can be viewed as particular instances of moving phase interfaces for $\nu>0$ and exemplify the subtle interplay between the forward diffusion in the bulk and the backward diffusion inside a phase interface of small but positive width. Moreover, numerical simulations as presented below indicate that the moving interfaces that emerge from certain classes of initial data can be approximated by traveling waves.
%
%
\subsection{Setting and overview of previous results}
%
%
\paragraph{Simplified setting }
To obtain explicit formulas we restrict our analysis to the continuous and piecewise linear constitutive relation
\begin{align}
\label{trilin}
\Phi^\prime\at{u}=u-\sgn_\ka\at{u}=
\left\{\begin{array}{rl}
u+1&\text{for $u\in\ocinterval{-\infty}{-\ka}$\,,}\\
\D-\frac{\,1-\ka\,}{\ka}\,u&\text{for $u\in\ccinterval{-\ka}{+\ka}$\,,}\\
u-1&\text{for $u\in\cointerval{+\ka}{+\infty}$\,,}
\\\end{array}\right.
\end{align}
which involves the additional parameter $0<\ka<1$ and is illustrated in Figure \ref{FigTrilinear}. The spinodal region, in which the backward character of the equation manifests, is given by the interval $\ccinterval{-\ka}{-\ka}$, while the regions $\ocinterval{-\infty}{-\ka}$ and $\cointerval{+\ka}{+\infty}$ represent the two phases and are related to forward diffusion. In the limit as $\kappa \to 0$, the spinodal region contracts to the point $u=0$, so $\sgn_\ka$ coincides with the usual sign function and $\Phi^\prime$ becomes bilinear and discontinuous. Piecewise linear functions $\Phi^\prime$ have already been studied previously in the context of ill-posed diffusion equations since they simplify the nonlinear problem considerably.
%
%
\begin{figure}[ht!]%
\centering{%
\includegraphics[width=0.8\textwidth]{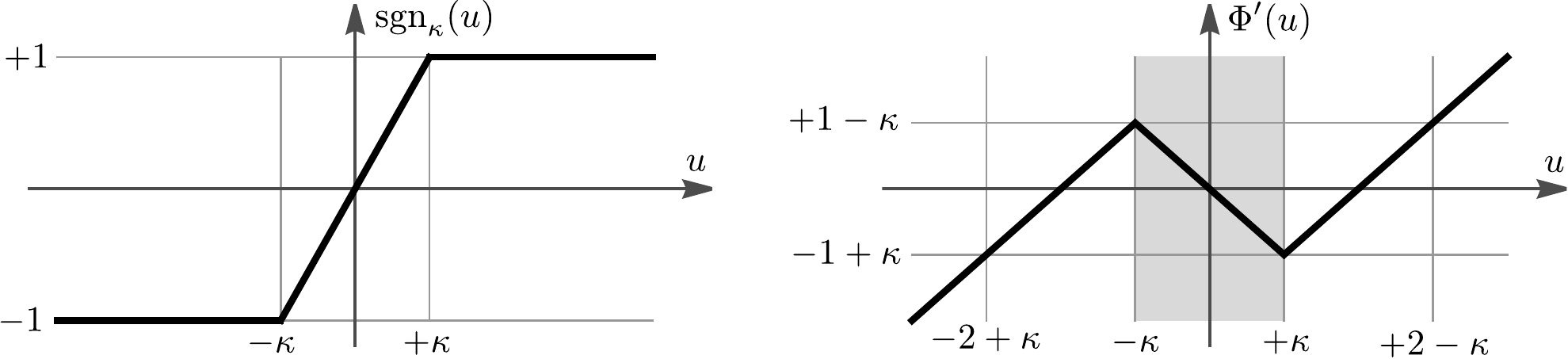}%
}%
\caption{%
\emph{Left}. The modified sign function $\sgn_\ka$ with parameter  $0<\ka<1$.
\emph{Right}. The trilinear function from \eqref{trilin} which decreases inside the spinodal interval (gray box) but increases outside. 
}%
\label{FigTrilinear}%
\end{figure}%
%
\par 
Of particular interest are \emph{single-interface solutions} of the viscous model \eqref{PDE} which exhibit a single transition between the two phases and satisfy
\begin{align}
\label{singInt}
u\pair{t}{x}\in\left\{\begin{array}{rl}
\ocinterval{-\infty}{-\ka}&\text{for $x\in\ocinterval{-\infty}{\xi_-\at{t}}$}\\
\ccinterval{-\ka}{+\ka}&\text{for $x\in\ccinterval{\xi_-\at{t}}{\xi_+\at{t}}$}\\
\cointerval{+\ka}{+\infty}&\text{for $x\in\cointerval{\xi_+\at{t}}{+\infty}$}
\\\end{array}\right.
\end{align}
for two time-dependent positions $\xi_-\at{t}\leq\xi_+\at{t}$ that represent the boundaries of the phase interface.  In the literature, such solutions are sometimes called \emph{two-phase solutions} and the monotone traveling waves constructed below fall into this class. A priori, it is not  clear that single-interface states are invariant under the dynamics of the viscous PDE \eqref{PDE} and the question of persistence has, to the best of our knowledge, not yet been conclusively answered in the general case. There exist, however, numerical evidence and related results in the limiting case $\nu=0$ as discussed below. Moreover, \cite{Jan23,HJ23a,HJ23b} establish rigorous persistence results in the special case $\ka=0$ for suitable time-discretizations of \eqref{PDE} and \eqref{bulk}.
%
%
\paragraph{On the sharp-interface limit}
Due to numerical simulations and heuristic arguments we expect that smooth single-interface solutions to \eqref{PDE}$+$\eqref{trilin} converge as $\nu\to0$  to a function $u$ which do not enter the spinodal region but possess a single phase interface of vanishing width. This means
\begin{align}
\notag
\xi\at{t}=\xi_-\at{t}=\xi_+\at{t}\,,\qquad u\pair{t}{x}<-\ka\quad\text{for}\quad x<\xi\at{t}\,,\qquad 
u\pair{t}{x}>+\ka\quad\text{for}\quad x>\xi\at{t}
\end{align}
and simplifies the bilinear constitutive relation \eqref{trilin} to 
\begin{align}
\label{simpCR}
 p\pair{t}{x}:=\Phi^\prime\bat{u\pair{t}{x}}=u\pair{t}{x}-\sgn\bat{x-\xi\at{t}}\,.
\end{align}
Moreover, the formal asymptotic analysis in \cite{EP04} predicts that $u$ and $p$ are discontinuous and continuous at the interface $x=\xi\at{t}$, respectively, and that the limit dynamics are governed by a free boundary problem with strong hysteresis that permits both standing and moving phase interfaces. The first two ingredients to the limit model are the \emph{bulk diffusion}
\begin{align}
\label{lmBD}
\partial_t u\pair{t}{x} = \partial_x^2 p\pair{t}{x}
\qquad \text{for}\quad x\neq\xi\at{t}
\end{align}
and the \emph{Stefan condition}
\begin{align}
\label{lmSC}
\bjump{p\pair{t}{\cdot}}_{x=\xi\at{t}}=0\,,\qquad\quad
\bjump{u\pair{t}{\cdot}}_{x=\xi\at{t}}=2\,,\qquad\quad
2\,\dot{\xi}\at{t}+\bjump{\partial_xp\pair{t}{\cdot}}_{x=\xi\at{t}}=0\,,
\end{align}
where we denote spatial jumps as
\begin{align*}
\bjump{p\pair{t}{\cdot}}_{x=\xi\at{t}}= \lim_{x \searrow\xi\at{t}} p\pair{t}{x} - \lim_{x \nearrow \xi\at{t}} p\pair{t}{x}.
\end{align*}
 The bulk diffusion is actually linear since \eqref{simpCR} ensures $\partial_x^2 p=\partial_x^2u$ outside the interface and the  Stefan condition \eqref{lmSC} ensures that the parabolic PDE \eqref{lmBD} holds across the interface $x=\xi\at{t}$ in a distributional sense. The last part of the limiting free boundary problem  is the \emph{hysteretic flow rule}
\begin{align}
\label{lmFR}
\begin{split}
\begin{array}{lcccc}
\text{(left moving)}&\quad&\dot{\xi}\at{t}<0&\quad\Longrightarrow\quad& p\bpair{t}{\xi\at{t}}=+1-\ka
\,,\\%
\text{(right moving)}&&\dot{\xi}\at{t}>0&\quad\Longrightarrow\quad& p\bpair{t}{\xi\at{t}}=-1+\ka
\,,\\%
\text{(standing)}&&p\bpair{t}{\xi\at{t}}\in\oointerval{-1+\ka}{+1-\ka}&\quad\Longrightarrow\quad&\dot{\xi}\at{t}=0\,,
\end{array}
\end{split}
\end{align}
which combines three dynamical relations between the current interface value of $p$ and the current interface speed $\dot\xi\at{t}$. It stipulates that the interface can only move to the left or to the right when $p\bpair{t}{\xi\at{t}}$ attains the corresponding critical value. Moreover, it is equivalent to a certain family of entropy inequalities as discussed in \cite{Plo94,EP04}.
%
%
\paragraph{Known results and open problems} 
Existence and uniqueness results for the free \mbox{boundary} problem \eqref{lmBD}$+$\eqref{lmSC}$+$\eqref{lmFR} --- or equivalently, for entropy solutions to \eqref{lmBD}$+$\eqref{lmSC} --- can be found in \cite{MTT09,Sma10,Ter11,ST13a} but the global existence is guaranteed for standing interfaces only. The uniqueness of solutions has also been shown in \cite{HH13} using arguments from \cite{Hil89,Vis06} and the corresponding Riemann problem has been solved in \cite{GT10,LM12}.
\par
For the viscous approximation \eqref{PDE}, a rigorous justification of \eqref{lmBD}$+$\eqref{lmSC}$+$\eqref{lmFR} in the limit $\nu\to0$ has not yet been accomplished although there exist partial results as in \cite{EP04,Sma10}. Moreover, \cite{HH13,HH18} derive these equations as scaling limit of the lattice approximation 
\begin{align}
\label{lattice}
\partial_t u\pair{t}{x}=\Phi^\prime\bat{u\pair{t}{x+\nu}}+\Phi^\prime\bat{u\pair{t}{x-\nu}}-2\, \Phi^\prime\bat{u\pair{t}{x}}
\end{align}
with bilinear or trilinear $\Phi^\prime$ and for a special class of initial data that produces both moving and standing interfaces but excludes any change of the propagation direction. For a slightly different lattice approximation of ill-posed diffusion equations, the limit $\nu\to0$ has been taken  for a cubic nonlinearity
in \cite{GN11,BGN13} using initial conditions that lead to finitely many standing interfaces. These results also allow to study the limit $t\to\infty$ and have later been generalized in \cite{BGN13} to a wider class of initial data.
\par
The major problem in the mathematical analysis for $\nu>0$ is that the propagation of phase interfaces involves different time and space scales since the backward diffusion inside the spinodal region produces strong fluctuations. More precisely, both the spatial and temporal derivatives of $u$ are rather large inside and near the phase interface while we expect them to be uniformly bounded in the main part of the bulk. It remains a challenging task to derive suitable localization estimates for $\partial_t u$ as well as $\partial _x u$ and to control the macroscopic impact of the microscopic fluctuations in the limit $\nu\to0$.  We believe that the exact traveling wave solutions constructed below provide a better understanding of the fine structure of propagating phase boundaries in the viscous PDE \eqref{PDE}.  
\par
Besides of single-interface solutions as in \eqref{singInt} one might also study two-phase solutions with multiple phase boundary or more general solutions in which the function $u$ is replaced by a Young measure that describes a mixture between the two phases. We refer to \cite{Plo93} for a discussion of the different solution concepts, to \cite{Hol83,ST10,ST13b,Ter14} for existence and non-uniqueness results concerning measure-valued solutions, and to \cite{Ter15} for the existence of weak solutions that penetrate the spinodal region.
%
\paragraph{Hysteresis and other models}
The dynamics of sharp interfaces depend on the \mbox{regularization} of the ill-posed diffusion equation \eqref{bulk}. Any reasonable limit model combines the bulk diffusion  \eqref{lmBD} with the Stefan condition \eqref{lmSC} but these equations do not determine the evolution of the free boundary completely and must be complemented by an extra condition. The latter takes into account microscopic details in the vicinity of the interface and differs for distinct regularization. 
\par
For the one-dimensional Cahn-Hilliard equation 
\begin{align}
\label{CahnHilliard}
\partial_t u\pair{t}{x}=\partial_x^2
\Phi^\prime\bat{u\pair{t}{x}}-\nu^2\partial_x^4 u\pair{t}{x}
\end{align}
which is discussed, among others, in \cite{BBMN11,BFG06}, there is not hysteresis in the limit $\nu\to0$ since the interface value of $p$ is uniquely determined by the properties of $\Phi$. More precisely, for even double-well potentials $\Phi$ we find $p\bpair{t}{\xi\at{t}}=0$ according to the classical Maxwell construction.
\par
The sharp-interface limits of both the viscous regularization \eqref{PDE} and the lattice approximation \eqref{lattice}, however, are more complicated as they involve the flow rule \eqref{lmFR}. In particular, we expect a strong hysteretic behavior of the phase interface since $p\pair{t}{\xi\at{t}}$ is not fixed anymore but can alter in time and affect the propagation mode of the interface. More details concerning the interface hysteresis can be found in \cite{EP04,HH13,Jan23,HJ23a} and the thermodynamical properties of both \eqref{PDE} and \eqref{CahnHilliard} are discussed in \cite{DG17}. 
%
%
\subsection{Numerical simulations and insight}
To elucidate the dynamics of phase interfaces in the case $\nu>0$ we present some numerical results for a typical initial value problem.
%
%
\begin{figure}[ht!]%
\centering{%
\includegraphics[width=0.3\textwidth]{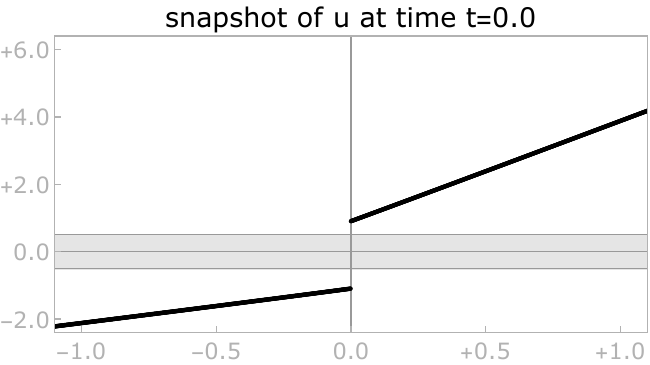}%
\hspace{0.025\textwidth}%
\includegraphics[width=0.3\textwidth]{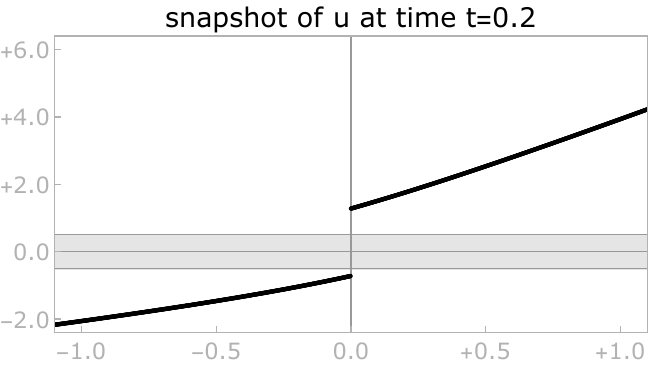}%
\hspace{0.025\textwidth}%
\includegraphics[width=0.3\textwidth]{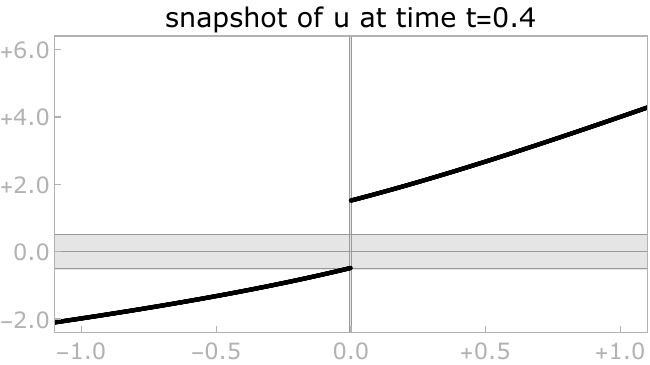}%
\medskip\\%
\includegraphics[width=0.3\textwidth]{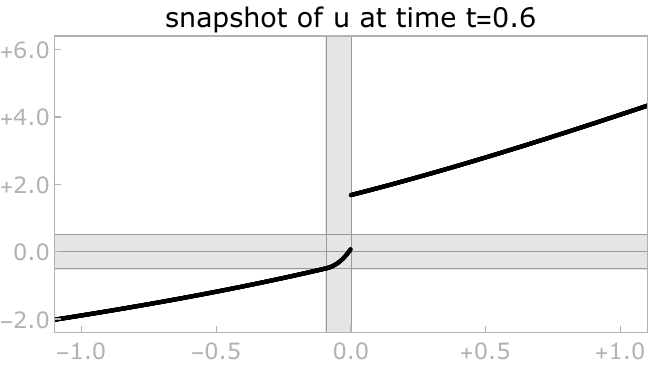}%
\hspace{0.025\textwidth}%
\includegraphics[width=0.3\textwidth]{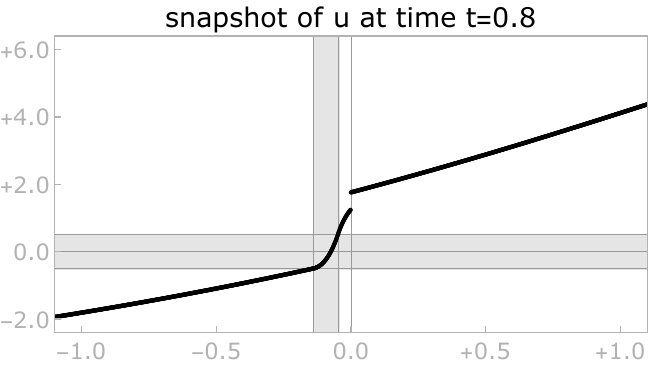}%
\hspace{0.025\textwidth}%
\includegraphics[width=0.3\textwidth]{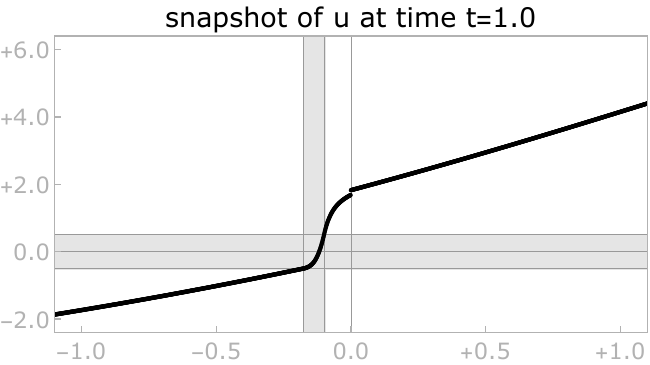}%
\medskip\\%
\includegraphics[width=0.3\textwidth]{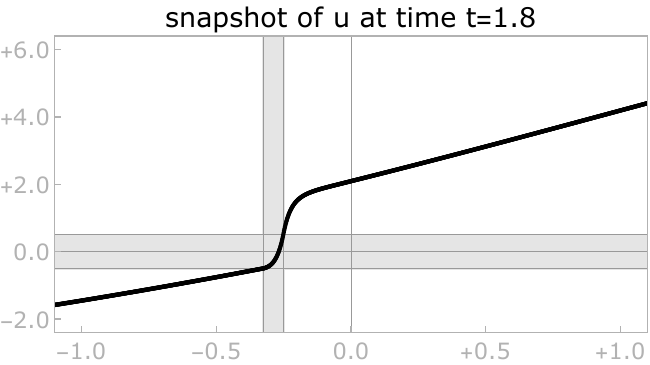}%
\hspace{0.025\textwidth}%
\includegraphics[width=0.3\textwidth]{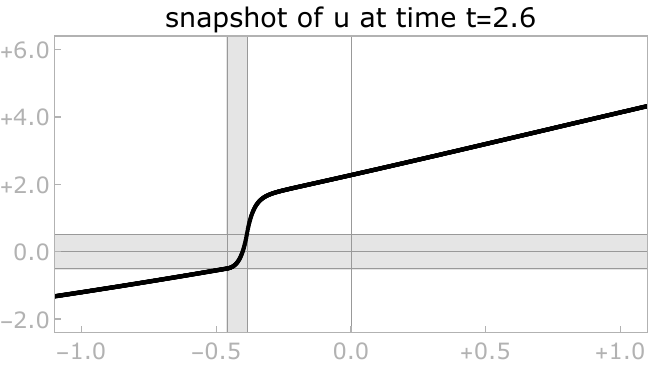}%
\hspace{0.025\textwidth}%
\includegraphics[width=0.3\textwidth]{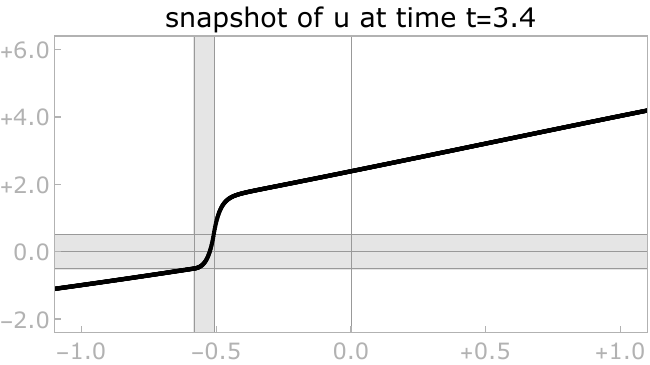}%
}%
\caption{
Typical numerical simulation for the initial data \eqref{IniData} which exhibit a single sharp interface (gray vertical box). For $t>t_\#$, the data near the propagating phase interface can be regarded as an approximate traveling wave, see also the interface curves in Figure \ref{FigNumSim2} as well as the plots of the temporal and spatial derivatives in Figure~\ref{FigNumSim3}. The parameters are $\nu=0.4$, $\ka=0.5$, $\Delta t = 0.01$, $\Delta x = 0.0004$, $\al=-0.62$, $\beta_-=1.0$, $\beta_+=3.0$. The computations were performed with $L=4.0$ but the solution is only shown on the smaller spatial interval $\ccinterval{-1}{+1}$.
}%
\label{FigNumSim1}
\end{figure}
%
%
\paragraph{Discretization} %
We transform \eqref{PDE} into 
\begin{align*}
\partial_t u\pair{t}{x}=\frac{1}{\,\nu^2\,}\,\Bat{w\pair{t}{x}-\Phi^\prime\bat{u\pair{t}{x}}}\,,\qquad 
w\pair{t}{x}-\nu^2\, \partial_x^2 w\pair{t}{x}=\Phi^\prime\bat{u\pair{t}{x}}
\end{align*}
as proposed in \cite{NCP91} and discretize the corresponding initial value problem on the bounded spatial domain $x\in\ccinterval{-L}{+L}$ by means of finite-difference approximations. More precisely, we choose a time step size $0<\Delta t\ll\nu^2$ as well as a spatial grid length $0<\Delta x\ll\nu$, use the formal identification
\begin{align*}
u^n_j=u\pair{n\,\Delta t}{j\,\Delta x}\,,\qquad 
n\in\big\{0\,,\; 1\,,\; \hdots\big\}\,,\qquad 
j\in\big\{-J\,,\;\hdots\,,\;-1\,,\;\,0\,,\;+1\,,\;\hdots\,,\;+J \big\}
\end{align*}
with $J\,\Delta{x}=L$, and compute the discrete data at time step $n+1$ by combining the nonlinear but explicit Euler step
\begin{align*}
u^{n+1}_j=u^n_j+\frac{\,\Delta t\,}{\,\nu^2\,}\, w^{n}_j-\frac{\,\Delta t\,}{\,\nu^2\,}\,\Phi^\prime\at{u^n_j}
\end{align*}
with the implicit but linear auxiliary problem
\begin{align*}
 w^n_j-\at{\frac{\nu}{\,\Delta x\,}}^2\,\bat{w^n_{j+1}+w^n_{j-1}-2\, w^n_j}=\Phi^\prime\bat{u^n_j}\,.
\end{align*}
The latter is complemented by the homogeneous Neumann boundary conditions $w^n_{\pm J \pm 1}=w^n_{\pm J}$ and solved  using a standard LU decomposition of the corresponding elliptic matrix. A similar scheme has been proposed and investigated in \cite{LM12}. 
\par
For simplicity  we restrict our simulations to the trilinear function \eqref{trilin} with fixed $0<\ka<1$ and impose the discrete analogue to the piecewise affine initial data 
\begin{align*}
u_\ini\at{x}= \left\{
\begin{array}{ccc}
-1+\al +\be_-\,x&&\text{for $x<0$}\\
+1+\al+\be_+\, x&&\text{for $x>0$}
\end{array}\right.
\end{align*}
with parameters $\al\in\oointerval{-1+\ka}{+1-\ka}$ and $0<\beta_-<\beta_+$. This choice is consistent with a single standing phase boundary in the sharp-interface model \eqref{lmBD}$+$\eqref{lmSC}$+$\eqref{lmFR} as it guarantees 
\begin{align}
\label{IniData}
u_\ini\at{x}<-\ka\quad \text{for}\quad x<0 \,,\qquad u_\ini\at{x}>+\ka\quad \text{for}\quad x>0\,,\qquad\bjump{u_\ini}_{x=0}=+2
\end{align}
as well as $\xi_-\at{0}=\xi_+\at{0}=0$ .
%
%
\begin{figure}[ht!]%
\centering{%
\includegraphics[width=0.3\textwidth]{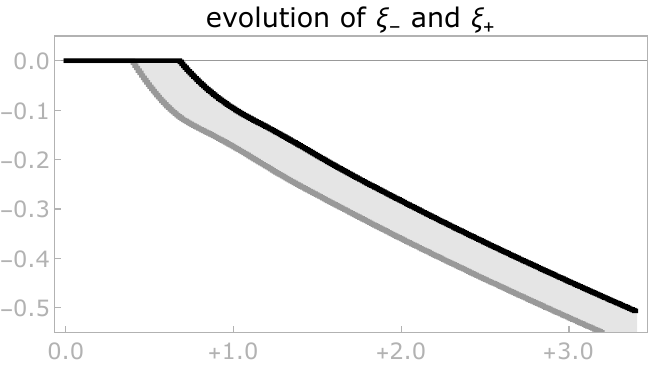}%
}%
\caption{%
The interface boundaries $\xi_-$ (dark gray) and $\xi_+$ (black) as functions of time $t$ for the solution from Figure~\ref{FigNumSim1}.
}%
\label{FigNumSim2}
\end{figure}
%
%
\paragraph{  Interpretation}
Snapshots of the numerical solution and the two interface curves are shown within Figure \ref{FigNumSim1} and \ref{FigNumSim2}, respectively, while Figure \ref{FigNumSim3} displays the corresponding temporal and spatial \mbox{derivatives} \mbox{after} normalization. For small times $t$, the phase interface neither widens nor propagates and the solution attains values outside the spinodal region.  The dynamics are therefore completely determined by the linear diffusion equation $\partial_t p\pair{t}{x}=\partial_x^2p\pair{t}{x}$ since $p\pair{t}{x}=u\pair{t}{x}-\sgn\at{x}$ holds for all $x\in\Rset$. At time $t_*\approx0.4$, however, the data touch the lower critical line $u=-\ka$ and
begin to penetrate the spinodal region from below,  so the width of the phase interface becomes \mbox{positive}. Moreover, the interface starts to propagate to the left (i.e., into the phase $u<-\ka$) since the spatial gradient of $u$ on its right hand side still exceeds the derivative on the left hand side. Between $t_*$ and $t_\#\approx1.0$ we observe a transient regime in which the dynamics produce continuous transitions layers both inside and behind the phase interface while there is no such layer in front of the interface. Finally, for $t\geq t_\#$ we observe a traveling-wave-like behavior near the propagating interface in which the shape of the numerical solution changes only slowly in time and is basically shifted to the left.  A similar behavior can be observed for a wide class of single-interphase initial data, where both the speed and width of the emerging traveling wave strongly depends on $\bjump{\partial _x u_\ini}_{x=0}=\be_+-\be_-$. In the long run, however, the interface decelerates slowly and the width of the interface is getting smaller again. Moreover, for monotone initial data with strong spatial slope changes on either side of the interface one also observes pinning and depinning events at which the propagation mode switches between standing and left- or right-moving.
%
%
\begin{figure}[ht!]%
\centering{%
\includegraphics[width=0.3\textwidth]{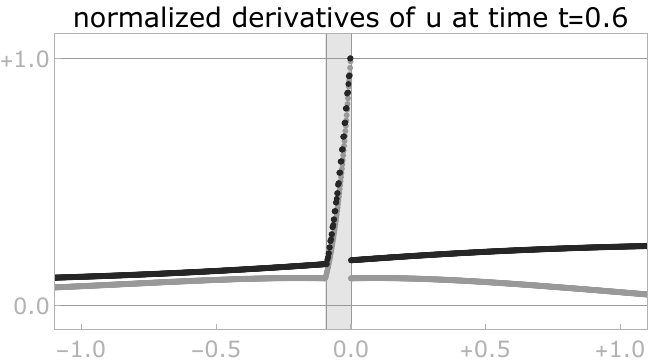}%
\hspace{0.025\textwidth}%
\includegraphics[width=0.3\textwidth]{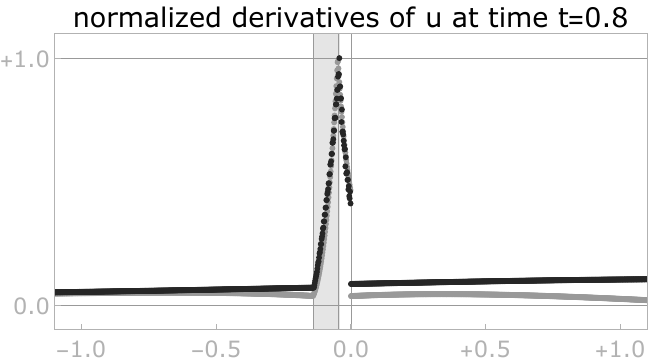}%
\hspace{0.025\textwidth}%
\includegraphics[width=0.3\textwidth]{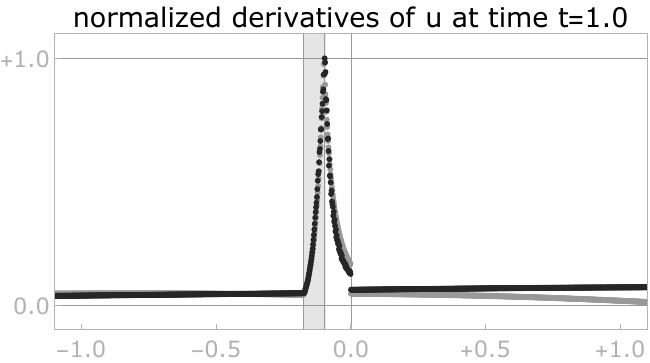}%
\medskip\\%
\includegraphics[width=0.3\textwidth]{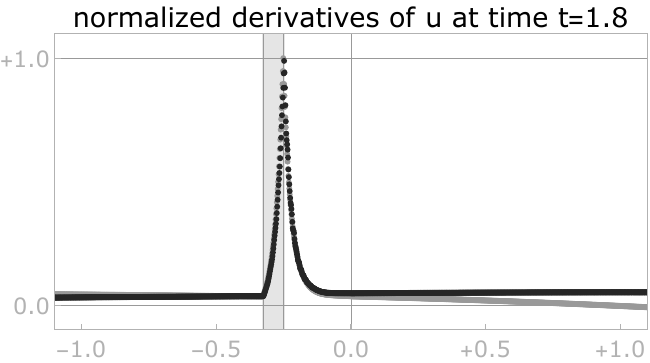}%
\hspace{0.025\textwidth}%
\includegraphics[width=0.3\textwidth]{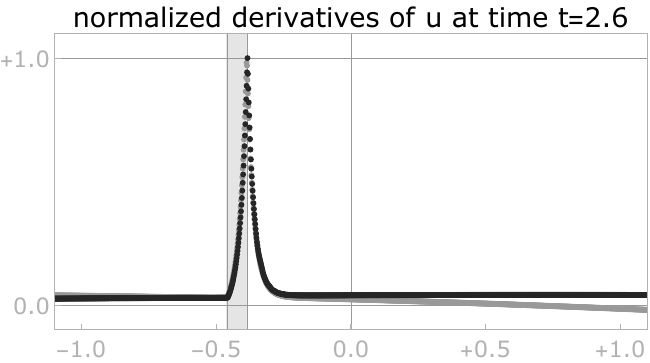}%
\hspace{0.025\textwidth}%
\includegraphics[width=0.3\textwidth]{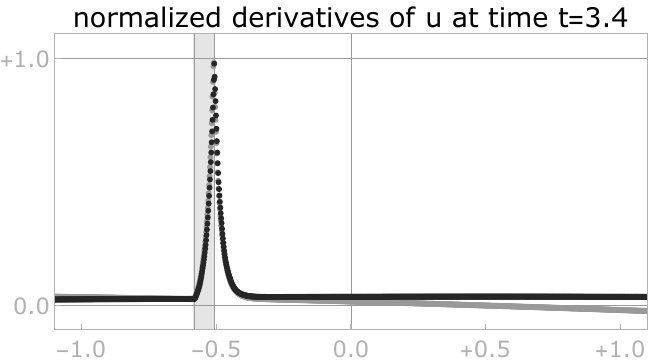}%
}%
\caption{%
Scaled snapshots of $\partial _ t u$ (black) and $\partial _x u$ (gray) for the numerical data from Figure~\ref{FigNumSim1}, where each amplitude has been normalized to $1$. For $t\geq t_\#$, both derivatives differ in the vicinity of the moving phase interface essentially by a scalar factor only.
}%
\label{FigNumSim3}
\end{figure}
%
%
\subsection{Main results and plan of paper}
It seems that traveling wave solutions to the viscous forward-backward diffusion equation \eqref{PDE} have not yet been studied. In this paper we characterize all  waves that are strictly monotone and connect the two phases by a single phase interface. In particular, each wave enters and leaves the spinodal region exactly once. We restrict our considerations to the trilinear function $\Phi^\prime$ from \eqref{trilin} because this allows us to compose the nonlinear wave of the solutions to linear auxiliary ODEs with constant coefficients. Similar traveling waves can be expected to exist for more general bistable nonlinearities but the corresponding analysis is more involved. It requires to characterize the monotonicity of solutions to certain nonlinear differential equations and does not provide explicit formulas.
\par
Our main findings can be summarized as follows.
\begin{result*} Let $0<\ka<1$ and a negative wave speed be fixed.
\begin{enumerate}
\item 
For any given $\nu>0$ there exists a unique two-parameter family of left-moving and strictly increasing traveling wave solutions to \eqref{PDE}$+$\eqref{trilin} but most of these waves grow for small $\nu$ rapidly at $-\infty$.
\item 
There exists a subfamily that converges for $\nu\to0$ to a one-parameter family of traveling wave solutions to the hysteretic limit model \eqref{lmBD}$+$\eqref{lmSC}$+$\eqref{lmFR}. 
\end{enumerate}
Moreover, symmetry arguments imply three similar results for traveling waves that are strictly \mbox{decreasing} and/or move with prescribed positive speed.
\end{result*}
Our first main result is formulated in Theorem \ref{ThmEx1} and sketched in Figure \ref{FigWaves1}. In the proof we first solve linear auxiliary ODEs with constant coefficients for the derivative profile of a traveling wave and evaluate natural matching conditions, see equations \eqref{TWODE2} and \eqref{JC}.
Afterwards we discuss the sign of certain derived quantities in order to single out the values of the remaining free parameters that are compatible with the imposed shape constraint. The subfamily for our second main result is identified in Theorem \ref{ThmEx2} and Theorem \ref{ThmConv1} concerns the passage to the limit  $\nu\to0$, see also Figures \ref{FigWaves2} and \ref{FigWaves4} for an illustration. Finally, Theorem \ref{ThmConv2} provides simplified formulas in the limiting case $\ka\to0$ and the symmetry transformations mentioned above are explained in Figure \ref{FigWaves3}.
%
%
%
\section{Monotone traveling waves}
%
%
In this section we study special solutions to the viscous PDE \eqref{PDE}$+$\eqref{trilin} that penetrate both phases and satisfy the traveling wave ansatz 
\begin{align}
\label{TWAnsatz}
 u\pair{t}{x}=U\at{X}\,,\qquad X=x+S\,t\,.
\end{align}
Here, the parameter $S$ quantifies the negative speed and the wave profile $U$ is supposed to be both continuously differentiable and monotonic. In what follows we assume $S>0$ and characterize all solutions with increasing profile $U$, see Figures \ref{FigWaves1} and \ref{FigWaves2} for an schematic illustration. However, similar arguments can be applied to $S<0$ and/or decreasing functions $U$ and provide traveling waves as depicted in Figure \ref{FigWaves3}.
%
%
\paragraph{Preliminaries and further notations}%
Each traveling wave with increasing $U$ possesses a single phase interface and without loss of generality we can assume that it is centered around $X=0$. We denote the width of this interface by $2\,\Xi$ and introduce the three disjoint intervals
\begin{align}
\label{Intervals}
I_-:=\ocinterval{-\infty}{-\,\Xi}\,,\qquad I_0:=\ccinterval{-\,\Xi}{+\,\Xi}\,,\qquad 
I_+:=\cointerval{+\,\Xi}{+\infty}\,,
\end{align}
which cover $\Rset$. The intervals $I_-$ and $I_+$ represent the regions in which $U$  attains values in either one of the two phases
while it is confined to the spinodal interval inside the interface $I_0$. Notice, however, that $\Xi$ is not known a priori but must be found as part of the solution. Using $\Xi$, the constraints on $U$ can be written as 
\begin{align}
\label{TWConstr1}
 U^\prime\at{X}\geq0\quad\text{for all}\quad X\in I_-\cup I_0\cup I_+\,,\qquad\qquad U\at{X}\in\left\{\begin{array}{lcl}
\ocinterval{-\infty}{-\ka}&&\text{for $X\in I_-$}
\\%
\ccinterval{-\ka}{+\ka}&&\text{for $X\in I_0$}
\\%
\cointerval{+\ka}{+\infty}&&\text{for $X\in I_+$}
\end{array}\right.
\end{align}
and imply  $U\at{\pm\Xi}=\pm\ka$.
%
%
\paragraph{ODE for traveling waves}
Thanks to \eqref{PDE} and \eqref{trilin}, the traveling wave ansatz \eqref{TWAnsatz} yields 
\begin{align}
\label{TWODE1}
S\,\tfrac{\dint}{\dint X} U\at{X}-\nu^2\,S\,\tfrac{\dint{}^3}{\dint  X^3 } U\at{X}
=\tfrac{\dint{}^2}{\dint X^2 }\Bat{ U\at{X}-\sgn_{\,\ka}\bat{U\at{X}}}.
\end{align}
Writing $V=\dint{U}/\dint{X}$, we see that traveling wave solutions to \eqref{PDE} fulfill
\begin{align}
\label{TWODE2}
S\ V\at{X}-\nu^2\,S\,\tfrac{\dint{}^2}{\dint  X^2 } V\at{X}
=\tfrac{\dint{}}{\dint X }\Bat{ V\at{X}-\sgn_{\,\ka}^\prime\bat{U\at{X}}V\at{X}}
\end{align}
thanks to the chain rule. Here, the coefficient function on the right hand side is piecewise constant in view of \eqref{trilin}  and compactly supported with
\begin{align}
\label{DCCoeff}
\sgn_{\,\ka}^\prime\bat{U\at{X}}=\left\{\begin{array}{lcl}
0&&\text{for $X\in I_-$}\,,
\smallskip\\%
\D\frac{1}{\,\ka\,}&&\text{for $X\in I_0$}\,,
\smallskip\\%
0&&\text{for $X\in I_+$}\,.
\end{array}\right.
\end{align}
Moreover, the derivative of the wave profile must satisfy the consistency relation
\begin{align}
\label{TWConstr2}
V\at{X}\geq0\quad\text{for all}\quad X\in I_-\cup I_0\cup I_+\,,\qquad \qquad \int\limits_{-\Xi}^{+\Xi}V\at{X}\dint{X}=2\,\ka\,,
\end{align}
which ensures the existence of a primitive $U$ as in \eqref{TWConstr1}.
%
%
\paragraph{Exponential rates of the determining ODE}
The ODE \eqref{TWODE2} is linear and autonomous on each of the three intervals defined in \eqref{Intervals} and the general solutions involve the exponential rates
\begin{align}
\label{Rates1}
\la_\pm=-\frac{1}{\,2\,\nu^2\,S\,}\pm\sqrt{\at{\frac{1}{\,2\,\nu^2\,S\,}}^2+\frac{1}{\,\nu^2\,}\,}\,,\qquad 
\mu_\pm=+\frac{1-\ka}{2\,\nu^2\,S\,
\ka}\pm\sqrt{\at{\frac{\,1-\ka\,}{\,2\,\nu^2\,S\,\ka\,}}^2+\frac{1}{\,\nu^2\,}\,}\,.
\end{align}
Here, $\la_\pm$ corresponds to both $I_-$ and $I_+$ while $\eta_\pm$ characterize the fundamental solutions inside of the intervals $I_0$. These rates satisfy the order relations
\begin{align}
\label{Rates2}
\la_-<0<\la_+\,,\qquad \mu_-<0<\mu_+
\end{align}
as well as the asymptotic expansions
\begin{align}
\label{Expansion1}
\la_-=-\frac{1}{\,\nu^2\,S\,}-S+O\at{\nu^2}\,,\qquad
\la_+=S+O\at{\nu^2}
\end{align}
and
\begin{align}
\label{Expansion2}
\mu_-=-\frac{\,S\,\ka\,}{\,1-\ka\,}+O\at{\nu^2}\,,\qquad 
\mu_+=\frac{\,1-\ka\,}{\,\nu^2\,S\,\ka\,}+\frac{S\,\ka}{\,1-\ka\,}+O\at{\nu^2}\,.
\end{align}
The subsequent analysis also relies on the following results.
%
%
\begin{lemma}[sign of auxiliary quantities]\label{LemSC}
The estimates
\begin{align}
\label{SC1}
0>\tau_{--}:=-\la_-+\mu_--\frac{1}{\,\nu^2\,S\,\ka\,}
\,,\qquad 
0<\tau_{-+}:=-\la_-+\mu_+-\frac{1}{\,\nu^2\,S\,\ka\,}
\end{align}
and 
\begin{align}
\label{SC2}
0>\tau_{+-}:=-\la_++\mu_--\frac{1}{\,\nu^2\,S\,\ka\,}
\,,\qquad 
0>\tau_{++}:=-\la_++\mu_+-\frac{1}{\,\nu^2\,S\,\ka\,}
\end{align}
hold for all $\nu>0$, $S>0$, $0<\ka<1$. 
\end{lemma}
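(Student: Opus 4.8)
The plan is to collapse each auxiliary quantity $\tau_{ij}$ into a plain difference of two of the exponential rates via a Vieta-type identity, and then to locate all four rates relative to the thresholds $\pm\nu^{-1}$.

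First I would note that, by the way \eqref{Rates1} is obtained from \eqref{TWODE2} and \eqref{DCCoeff}, the rates $\la_\pm$ are the two roots of $\nu^2S\lambda^2+\lambda-S=0$ while $\mu_\pm$ are the two roots of $\nu^2S\mu^2-\tfrac{1-\ka}{\ka}\mu-S=0$. Vieta's formulas then give
\begin{align*}
\la_++\la_-=-\frac{1}{\,\nu^2S\,}\,,\qquad
\mu_++\mu_-=\frac{\,1-\ka\,}{\,\nu^2S\ka\,}\,,\qquad
\la_+\la_-=\mu_+\mu_-=-\frac{1}{\,\nu^2\,}\,.
\end{align*}
Adding the first two relations and using $\tfrac{1-\ka}{\ka}+1=\tfrac1\ka$ yields the key identity
\begin{align*}
\frac{1}{\,\nu^2S\ka\,}=\at{\mu_++\mu_-}-\at{\la_++\la_-}\,,
\end{align*}
and inserting it into the definitions in \eqref{SC1}--\eqref{SC2} and cancelling produces the compact forms
\begin{align*}
\tau_{--}=\la_+-\mu_+\,,\qquad \tau_{-+}=\la_+-\mu_-\,,\qquad
\tau_{+-}=\la_--\mu_+\,,\qquad \tau_{++}=\la_--\mu_-\,.
\end{align*}

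With these identities the four sign statements become elementary. The order relations \eqref{Rates2} give $\la_-<0<\la_+$ and $\mu_-<0<\mu_+$, hence immediately $\tau_{-+}=\la_+-\mu_->0$ and $\tau_{+-}=\la_--\mu_+<0$. For the remaining two I would set $a:=\tfrac{1}{2\nu^2S}>0$ and $b:=\tfrac{1-\ka}{2\nu^2S\ka}>0$ (here the hypothesis $0<\ka<1$ enters, through $b>0$), so that $\la_\pm=-a\pm\sqrt{a^2+\nu^{-2}}$ and $\mu_\pm=b\pm\sqrt{b^2+\nu^{-2}}$. Squaring the obvious comparisons then shows $0<\la_+<\nu^{-1}<\mu_+$ and $\la_-<-\nu^{-1}<\mu_-$ for every admissible $\nu,S,\ka$, whence $\tau_{--}=\la_+-\mu_+<0$ and $\tau_{++}=\la_--\mu_-<0$. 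This proves \eqref{SC1}--\eqref{SC2} uniformly in all parameters.

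The only step that is not purely mechanical is recognising the identity $\tfrac{1}{\nu^2S\ka}=(\mu_++\mu_-)-(\la_++\la_-)$, which turns three opaque three‑term expressions into rate differences; after that, everything is a one‑line sign check using the signs and the $\pm\nu^{-1}$ bounds of the four rates. Should one wish to avoid Vieta altogether, one can instead substitute the explicit radicals of \eqref{Rates1} directly and estimate the square roots, but the computation is longer and obscures why the four quantities carry the stated signs.
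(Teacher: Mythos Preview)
Your proof is correct, and it takes a genuinely different route from the paper. The paper substitutes the explicit radicals from \eqref{Rates1} into the definitions, rescales via $\theta:=1/(2\nu S\ka)$ to obtain
\begin{align*}
\nu\,\tau_{\pm\pm}=-\theta\mp\sqrt{\ka^2\theta^2+1\,}\pm\sqrt{(1-\ka)^2\theta^2+1\,}\,,
\end{align*}
and then reads off the signs from elementary bounds like $\sqrt{\theta^2+1}<\theta+1$ together with monotonicity in $\ka$. Your Vieta identity $(\mu_++\mu_-)-(\la_++\la_-)=1/(\nu^2S\ka)$ collapses the three-term expressions to the clean forms $\tau_{--}=\la_+-\mu_+$, $\tau_{-+}=\la_+-\mu_-$, $\tau_{+-}=\la_--\mu_+$, $\tau_{++}=\la_--\mu_-$, which the paper never states; combined with the common product $\la_+\la_-=\mu_+\mu_-=-\nu^{-2}$ this makes the threshold $\pm\nu^{-1}$ appear naturally and turns the harder two cases into a single comparison. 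What your approach buys is structural insight (the $\tau_{ij}$ are literally rate differences, which also explains identities such as $\tau_{++}-\tau_{+-}=\mu_+-\mu_-$ used later in the paper) and a shorter argument; what the paper's direct substitution buys is that it requires no preliminary observation and is entirely mechanical once $\theta$ is introduced.
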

%
%
\begin{proof}
Setting $\theta:=1/\at{2\,\nu\,S\,\ka}>0$ we compute
\begin{align*}
\nu\,\tau_{--}&=-\,\theta+\sqrt{\ka^2\,\theta^2+1\,}-\sqrt{\at{1-\ka}^2\,\theta^2+1\,}
\,,\\%
\nu\,\tau_{-+}&=-\,\theta+\sqrt{\ka^2\,\theta^2+1\,}+\sqrt{\at{1-\ka}^2\,\theta^2+1\,}
\,,\\ %
\nu\,\tau_{+-}&=-\,\theta-\sqrt{\ka^2\,\theta^2+1\,}-\sqrt{\at{1-\ka}^2\,\theta^2+1\,}
\,,\\%
\nu\,\tau_{++}&=-\,\theta-\sqrt{\ka^2\,\theta^2+1\,}+\sqrt{\at{1-\ka}^2\,\theta^2+1\,}
\,.
\end{align*}
The assertion concerning $\tau_{+-}$ is trivial while the estimate for $\tau_{-+}$  is a consequence of
\begin{align*}
\sqrt{\ka^2\,\theta^2+1\,}>\ka\,\theta\,,\qquad \sqrt{\nat{1-\ka}^2\,\theta^2+1\,}>\at{1-\ka}\,\theta\,.
\end{align*}
The claims on $\tau_{--}$ and $\tau_{++}$ follow from  $\sqrt{\theta^2+1\,}<\theta+1$  due to monotonicity with respect to $\ka$.
\end{proof}
%
%
\paragraph{Matching conditions}
In order to solve the ODE for the traveling waves, see equation \eqref{TWODE2}, we make the ansatz
\begin{align}
\label{TWSolV}
V\at{X}=\left\{\begin{array}{lccr}
A_-\,\exp\at{\la_-\,X}+A_+\,\exp\at{\la_+\,X}&&\text{for $X\in I_-$}\,,
\\%
B_-\,\exp\at{\mu_-\,X}+B_+\,\exp\at{\mu_+\,X}&&\text{for $X\in I_0$}\,,
\\%
C_-\,\exp\at{\la_-\,X}+C_+\,\exp\at{\la_+\,X}&&\text{for $X\in I_+$}\,,
\end{array}\right.
\end{align}
which combines on each interval the respective independent solutions. Due to the discontinuities in the coefficient function \eqref{DCCoeff}, traveling waves cannot be smooth on $\Rset$ and both sides of \eqref{TWODE2} involve Dirac-type terms located at $X=\pm\Xi$. All singular contributions have to balance and using elementary arguments we deduce that $V$ must be continuous everywhere while the jumps of $\nu^2\,S\,\dint{V}/\dint{X}$ and $\sgn_{\ka}^\prime\at{U}\,V$ must equal each other. More specifically, any weak solution to the traveling waves ODE \eqref{TWODE2} complies with the conditions
\begin{align}
\label{JC}
\begin{array}{ccccccc}
V\at{-\Xi-0}&=&V\at{-\Xi+0}\,,\qquad \D\tfrac{\dint }{\dint{X}}V\at{-\Xi-0}&=&\D\tfrac{\dint }{\dint{X}}V\at{-\Xi+0}-
\D\frac{1}{\,\nu^2\,S\,\ka\,}V\at{-\Xi+0}\,,
\medskip\\
V\at{+\Xi+0}&=&V\at{+\Xi-0}\,,\qquad
\D \tfrac{\dint }{\dint{X}}V\at{+\Xi+0}&=&\D\tfrac{\dint }{\dint{X}}V\at{+\Xi-0}-
\frac{1}{\,\nu^2\,S\,\ka\,}V\at{+\Xi-0}\,,
\end{array}
\end{align}
where the notation $\pm0$ indicates one-sided limits. 
%
%
\paragraph{Representation of traveling waves}
Combining \eqref{TWSolV} with \eqref{JC} we arrive at the following equations
\begin{align}
\label{TWCoeff}
\begin{split}
A_-&=\Bat{\tau_{+-}\,B_-\,\exp\at{-\mu_-\,\Xi}+\tau_{++}\,B_+\,\exp\at{-\mu_+\,\Xi}}\,\frac{\,\exp\at{+\la_-\,\Xi}\,}{\la_--\la_+}
\,,\\%
A_+&=\Bat{\tau_{--}\,B_-\,\exp\at{-\mu_-\,\Xi}+\tau_{-+}\,B_+\,\exp\at{-\mu_+\,\Xi}}\,\frac{\,\exp\at{+\la_+\,\Xi}\,}{\la_+-\la_-}
\,,\\%
C_-&=\Bat{\tau_{+-}\,B_-\,\exp\at{+\mu_-\,\Xi}+\tau_{++}\,B_+\,\exp\at{+\mu_+\,\Xi}}\,\frac{\,\exp\at{-\la_-\,\Xi}\,}{\la_--\la_+}
\,,\\%
C_+&=\Bat{\tau_{--}\,B_-\,\exp\at{+\mu_-\,\Xi}+\tau_{-+}\,B_+\,\exp\at{+\mu_+\,\Xi}}\,\frac{\,\exp\at{-\la_+\,\Xi}\,}{\la_+-\la_-}
\,,%
\end{split}
\end{align}
which couple the coefficients in \eqref{TWSolV} and involve the auxiliary quantities from Lemma \ref{LemSC}. In these formulas, the width $\Xi>0$ as well as the real amplitudes $B_-$ and $B_+$  are still independent of the parameters $\nu,\,\ka,\,S$ but the constraints in \eqref{TWConstr2} give rise to certain restrictions.  Leveraging the previous computations, we can make a first statement on solutions to the traveling wave ODE \eqref{TWODE2} for $V=\tfrac{\dint{}}{\dint{X}}U$, combined with the compatibility conditions \eqref{TWConstr2}.
%
\begin{figure}[ht!]%
\centering{%
\includegraphics[width=0.8\textwidth]{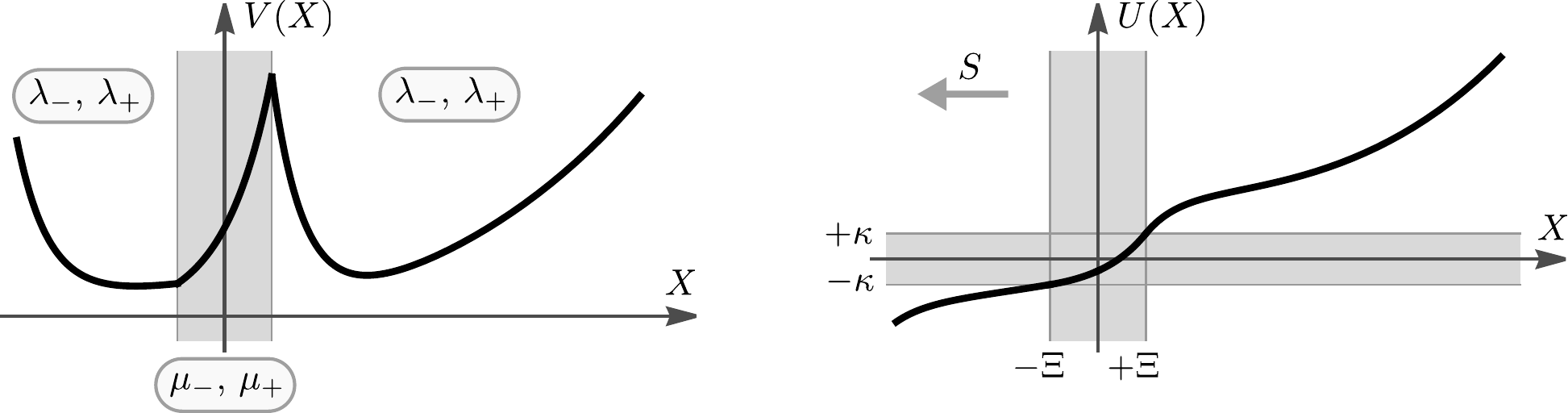}%
}%
\caption{%
The traveling wave profiles $V=\tfrac{\dint{}}{\dint{X}}U$ (\emph{left panel}) and $U$ (\emph{right panel}) from Theorem~\ref{ThmEx1} involve the exponential rates from \eqref{Rates1} and move with speed $-S$. The horizontal and vertical gray stripes indicate the spinodal region and the phase interface, respectively. 
}%
\label{FigWaves1}%
\end{figure}%
%
%
%
\begin{theorem}[family of increasing and left-moving traveling waves] 
\label{ThmEx1}
Let the parameters $\nu>0$, $0<\ka<1$, $S>0$, and $\Xi>0$ be arbitrarily fixed. Then, for any choice of $\si$ with
\begin{align*}
-\frac{\,\tau_{++}\,}{\tau_{+-}}\,\exp\Bat{-\bat{\mu_+-\mu_-}\,\Xi}\;\leq\; \si\;\leq\;
-\frac{\,\tau_{-+}\,}{\tau_{--}}\,\exp\Bat{+\bat{\mu_+-\mu_-}\,\Xi}
\end{align*}
there exists a unique solution to \eqref{TWODE2}$+$\eqref{TWConstr2} with 
\begin{align}
\label{ForBM}
B_-=\si\,B_+\,,\qquad B_+>0
\end{align}
which is strictly positive and also satisfies
\begin{align}
\label{NessSigns}
A_-\geq0\,,\qquad C_+\geq0 \,.
\end{align}
Moreover, other solutions do not exist.
\end{theorem}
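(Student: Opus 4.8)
The plan is to reduce the whole statement to a sign analysis of the explicit coefficients in \eqref{TWCoeff}. By the matching analysis leading to \eqref{TWCoeff}, every weak solution of \eqref{TWODE2} has the form \eqref{TWSolV} with $A_\pm,C_\pm$ determined by $B_-$, $B_+$ and the fixed width $\Xi$, so a solution of \eqref{TWODE2}$+$\eqref{TWConstr2} is completely encoded by the pair $\pair{B_-}{B_+}$. Since \eqref{TWODE2} is linear and homogeneous, since the pointwise constraint $V\geq0$ is invariant under multiplication by a positive constant, and since the normalization $\int_{-\Xi}^{+\Xi}V=2\,\ka$ pins the scale down as soon as $V>0$ is known, it suffices to find all admissible rays $\{\pair{sB_-}{sB_+}:s>0\}$ and then rescale. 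I would split the proof into four parts: (i) the sign conditions $A_-\geq0$ and $C_+\geq0$ are necessary; (ii) $B_+\leq0$ is impossible; (iii) for $B_+>0$ the two conditions are exactly the stated $\si$-interval; (iv) they are also sufficient for $V>0$ on all of $\Rset$.

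For (i)--(ii): on $I_-$ we have $V\at{X}=A_-\exp\at{\la_-X}+A_+\exp\at{\la_+X}$, and since $\la_-<0<\la_+$ the term $A_-\exp\at{\la_-X}$ dominates as $X\to-\infty$, so strict positivity forces $A_-\geq0$; symmetrically $C_+\geq0$ from the behaviour on $I_+$. Reading off \eqref{TWCoeff} and using $\la_-<\la_+$ together with the signs $\tau_{+-},\tau_{++},\tau_{--}<0<\tau_{-+}$ from Lemma~\ref{LemSC}, the condition $A_-\geq0$ is equivalent to $\tau_{+-}B_-\exp\at{-\mu_-\Xi}+\tau_{++}B_+\exp\at{-\mu_+\Xi}\leq0$ and $C_+\geq0$ to $\tau_{--}B_-\exp\at{\mu_-\Xi}+\tau_{-+}B_+\exp\at{\mu_+\Xi}\geq0$. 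If $B_+=0$, then $\int_{-\Xi}^{+\Xi}V>0$ together with $V\geq0$ on $I_0$ forces $B_->0$, which by \eqref{TWCoeff} makes $C_+<0$ and hence $V$ negative near $+\infty$ --- a contradiction. If $B_+<0$, the first inequality forces $B_->0$ while the second forces $B_-<0$ --- again a contradiction. Hence $B_+>0$, and by homogeneity we may write $B_-=\si\,B_+$ with $B_+>0$.

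For (iii): dividing the two inequalities by $B_+>0$ and by $\tau_{+-}<0$, resp. $\tau_{--}<0$, turns them into $\si\geq-\tfrac{\tau_{++}}{\tau_{+-}}\exp\bat{-\bat{\mu_+-\mu_-}\Xi}$ and $\si\leq-\tfrac{\tau_{-+}}{\tau_{--}}\exp\bat{+\bat{\mu_+-\mu_-}\Xi}$, i.e. exactly the interval in the statement; from $\tau_{++}-\tau_{+-}=\tau_{-+}-\tau_{--}=\mu_+-\mu_->0$ and Lemma~\ref{LemSC} one gets $0>\tau_{++}>\tau_{+-}$ and $\tau_{-+}>0>\tau_{--}$, hence $-1<-\tfrac{\tau_{++}}{\tau_{+-}}<0<-\tfrac{\tau_{-+}}{\tau_{--}}$, so this interval is nonempty and contains $0$. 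For (iv), one first notes that $V\at{-\Xi}=B_+\bat{\si\exp\at{-\mu_-\Xi}+\exp\at{-\mu_+\Xi}}$ and $V\at{+\Xi}=B_+\bat{\si\exp\at{\mu_-\Xi}+\exp\at{\mu_+\Xi}}$ are affine in $\si$ and, using the bounds just displayed, strictly positive at both endpoints of the admissible interval, hence strictly positive throughout it. On each of $I_-$, $I_0$, $I_+$ the profile is of the form $a\exp\at{rX}+b\exp\at{sX}$ with $rs<0$, which has a critical point only if $a$ and $b$ share their sign; so in the remaining case $ab<0$ it is strictly monotone on $\Rset$ and attains its minimum over $I_-$, $I_0$, $I_+$ at $-\Xi$, $-\Xi$, $+\Xi$ respectively. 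Combining this with $A_-\geq0$, $C_+\geq0$, $B_+>0$ and $V\at{\pm\Xi}>0$ --- the degenerate cases $A_-=0$, $C_+=0$, $B_-=0$ being settled directly by the endpoint positivity --- one verifies $V>0$ on all of $\Rset$. Finally $\int_{-\Xi}^{+\Xi}V>0$, so a unique positive rescaling of $B_+$ (and of $B_-=\si B_+$) gives $\int_{-\Xi}^{+\Xi}V=2\,\ka$; this produces the asserted unique solution for each admissible $\si$ and shows that no others exist.

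The main obstacle I expect lies in part (iv): both the verification that $V\at{\pm\Xi}>0$ holds over the \emph{whole} $\si$-interval and the monotonicity argument that upgrades positivity at the single matching point $\pm\Xi$ to positivity on all of $I_\mp$. Parts (ii)--(iii) are essentially sign bookkeeping through \eqref{TWCoeff} and Lemma~\ref{LemSC}, but they have to be carried out with care about the directions of the inequalities $\la_-<\la_+$ and $\mu_-<\mu_+$.
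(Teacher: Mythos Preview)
Your proposal is correct and follows essentially the same strategy as the paper: both proofs reduce everything to the representation \eqref{TWSolV}$+$\eqref{TWCoeff}, derive the necessity of $A_-\geq0$, $C_+\geq0$, $B_+>0$ and the $\si$-bounds from the signs in Lemma~\ref{LemSC}, and then verify strict positivity of $V$ on each of $I_-,I_0,I_+$ via monotonicity of sums $a\exp\at{rX}+b\exp\at{sX}$ when $ab<0$.

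The only organisational difference lies in part (iv). The paper splits into the two cases $\si\leq0$ and $\si\geq0$ and propagates positivity from one interval to the next by continuity (e.g.\ $V>0$ on $I_-$ gives $V\at{-\Xi}>0$, which feeds into $I_0$, etc.). You instead establish $V\at{\pm\Xi}>0$ directly for all admissible $\si$ by noting that these values are affine in $\si$ and checking the two endpoints of the $\si$-interval, and then treat the three intervals independently. Your route avoids the $\si$-split at the cost of the endpoint computations; both arguments ultimately rely on the same monotonicity observation, and the direction of monotonicity in the case $ab<0$ (which you leave implicit) is fixed exactly as in the paper by the known signs $A_-\geq0$, $B_+>0$, $C_+\geq0$.
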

%
%
\begin{proof} 
\emph{\ul{Nonexistence of other waves}}\,: 
Let $V$ be any solution to \eqref{TWODE2} that complies with \eqref{TWConstr2}. Then,  \eqref{TWSolV}$+$\eqref{TWCoeff} are satisfied and the coefficients $B_-$, $B_+$ cannot vanish simultaneously due to the integral constraint in \eqref{TWConstr2}. Moreover, \eqref{NessSigns} must also hold true because otherwise $V$ would attain negative values for $X\to-\infty$ or $X\to+\infty$  due to \eqref{Rates2}. Finally, we combine \eqref{TWCoeff} with \eqref{SC1}$+$\eqref{SC2} and discussing the possible sign combinations for $B_-$,  $B_+$ separately, we show that  \eqref{NessSigns} implies the positivity of $B_+$ as well as the bounds for $\si$. 
\par\emph{\ul{Existence for $\si\leq0$}}\,: 
We fix an admissible $\si$, assume \eqref{ForBM} and let \eqref{TWSolV}$+$\eqref{TWCoeff} be satisfied so that $V$ solves \eqref{TWODE2}. We first show that $V$ is positive for any choice of $B_+>0$ and identify the correct value afterwards by a simple scaling argument. From \eqref{Rates2}$+$\eqref{SC1}$+$\eqref{TWCoeff} we deduce
\begin{align}
\label{Sign2}
A_+>0\,,\qquad C_+>0
\end{align} 
and since the lower bound for $\sigma$ also guarantees $A_-\geq0$, the function $V$ is positive on $I_-$ thanks to \eqref{TWSolV}. For $X\in I_0$, the estimate $V\at{X}>0$ follows by $V\at{-\Xi}>0$ and the monotonicity relation
\begin{align*}
\tfrac{\dint }{\dint{X}}V\at{X}=\mu_-\,B_-\,\exp\at{\mu_-\,X}+\mu_+\,B_+\,\exp\at{\mu_+\,X}>0\qquad\text{for}\quad X\in I_0\,.
\end{align*}
On the interval $I_+$, the positivity of $V$ can be inferred directly from \eqref{TWSolV}$+$\eqref{Sign2} in case of $C_-\geq0$ and otherwise via
\begin{align*}
\tfrac{\dint }{\dint{X}}V\at{X}=\la_-\,C_-\,\exp\at{\la_-\,X}+\la_+\,C_+\,\exp\at{\la_+\,X}>0
\qquad\text{for}\quad X\in I_+
\end{align*}
combined with $V\at{+\Xi}>0$. Finally, \eqref{ForBM} transforms the integral condition in \eqref{TWConstr2} into
\begin{align}
\label{ForBP}
B_+=\frac{2\ka}{\;\D\int\limits_{-\Xi}^{+\Xi}\bat{\si\,\exp\at{\mu_-\,X}+\exp\at{\mu_+\,X}}\dint{X}\;}=\frac{\,\ka\,\mu_-\,\mu_+\,}{\;\si\, \mu_+\sinh\at{\mu_-\,\Xi}+\mu_-\sinh\at{\mu_+\,\Xi}\;}
\end{align}
and ensures the existence and uniqueness of $B_+$ and $V$ in dependence of $\si\leq0$. Notice that $B_+$ is truly positive in \eqref{ForBP} since our results above imply $\si\,\exp\at{\mu_-\,X}+\exp\at{\mu_+\,X}>0$ for all $X\in I_0$ (positivity of $V$ for $B_+=1$).
\par\emph{\ul{Existence for $\sigma\geq 0$}}\,:  
We argue similarly to the previous case. By \eqref{Rates2}$+$\eqref{SC2}$+$\eqref{TWCoeff} we have
\begin{align}
\label{Sign1}
A_->0\,,\qquad C_->0
\end{align} 
and the upper bound for $\si$ implies via $C_+\geq0$ that $V$ is positive on $I_+$. The signs of $B_\pm$ imply $V\at{X}>0$ for all $X\in I_0$ and the positivity of $V$ on the interval $I_0$ follows immediately from \eqref{TWSolV}$+$\eqref{Sign1} in the case of $A_+\geq0$ and otherwise by combining $V\at{-\Xi}>0$ with 
\begin{align*}
\tfrac{\dint }{\dint{X}}V\at{X}=\la_-\,A_-\,\exp\at{\la_-\,X}+\la_+\,A_+\,\exp\at{\la_+\,X}<0\qquad\text{for}\quad X\in I_-\,.
\end{align*}
Finally, we employ again \eqref{ForBP}.
\end{proof}
%
%
\paragraph{Isolating a subfamily of relevant traveling waves }
In view of the expansion \eqref{Expansion1} we deduce that traveling waves with $A_->0$ change rapidly near $X\approx-\infty$ and usually do not possess reasonable limits for $\nu\to0$. For $A_-=0$, however, the exponential behavior is much nicer for both $X\to-\infty$ and $X\to+\infty$, and depends asymptotically only on the speed parameter $S$. We thus formulate a corresponding existence and uniqueness result in which $V\at{-\Xi}$ plays the role of an independent parameter.
%
%
\begin{theorem}[family of relevant traveling waves] 
\label{ThmEx2}
For any choice of
$\nu>0$, $0<\ka<1$, $S>0$ and $\eta>0$ there exist unique values $\Xi>0$ and $\si<0$ such that the traveling wave from Theorem~\ref{ThmEx1} satisfies
\begin{align*}
V\at{X}=\eta\,\exp\bat{\la_+\,\at{X+\Xi}}\qquad\text{for all}\quad X\leq-\Xi
\end{align*}
as well as $A_-=0<A_+$, $B_-<0<B_+$ and $C_-,\,C_+>0$.
\end{theorem}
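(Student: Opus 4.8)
The plan is to turn the exponential-tail requirement into two scalar conditions, use the first to eliminate $\si$ and the integral normalization to eliminate the amplitude, and so reduce the whole statement to the bijectivity of one explicit function of $\Xi$ alone.

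\emph{Eliminating $\si$.} By \eqref{TWSolV}, the identity $A_-=0$ turns $V$ on $I_-$ into $A_+\exp\at{\la_+X}$, so the requirement $V\at{X}=\eta\,\exp\bat{\la_+\at{X+\Xi}}$ for $X\leq-\Xi$ is equivalent to the pair $A_-=0$ and $V\at{-\Xi}=\eta$. Since $B_+>0$ for every wave from Theorem~\ref{ThmEx1}, the formula for $A_-$ in \eqref{TWCoeff} shows that $A_-=0$ holds if and only if
\[
\si=\si\at{\Xi}:=-\frac{\tau_{++}}{\tau_{+-}}\,\exp\Bat{-\bat{\mu_+-\mu_-}\,\Xi}\,,
\]
which is precisely the left endpoint of the admissible $\si$-interval of Theorem~\ref{ThmEx1}, hence admissible, and which is negative because $\tau_{++},\tau_{+-}<0$ by Lemma~\ref{LemSC}. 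Thus for every $\Xi>0$ there is a unique traveling wave from Theorem~\ref{ThmEx1} with $A_-=0$; for it $B_-=\si\at{\Xi}\,B_+<0<B_+$, while $A_+>0$ and $C_+>0$ are \eqref{Sign2}. Finally $C_->0$ follows by inserting $\si=\si\at{\Xi}$ into the third line of \eqref{TWCoeff}: the bracketed factor becomes $\tau_{++}B_+\bat{\exp\at{\mu_+\Xi}-\exp\at{\at{2\mu_--\mu_+}\Xi}}$, which is negative since $\mu_-<\mu_+$ and $\tau_{++}<0$, and the remaining factor $\exp\at{-\la_-\Xi}/\at{\la_--\la_+}$ is negative as well.

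\emph{Reduction to a scalar equation.} It remains to pick $\Xi$ so that $V\at{-\Xi}=\eta$. With $B_-=\si\at{\Xi}B_+$ one finds $V\at{-\Xi}=B_+\exp\at{-\mu_+\Xi}\,\tfrac{\mu_--\mu_+}{\tau_{+-}}$; inserting the explicit $B_+$ from \eqref{ForBP} (legitimate since $\si\at{\Xi}<0$) and simplifying (multiply through by $\exp\at{\mu_+\Xi}$) makes the exponential factors cancel and leaves
\begin{align*}
V\at{-\Xi}=g\at{\Xi}&:=\frac{2\,\ka\,\mu_-\,\mu_+\,\bat{\mu_--\mu_+}}{\tau_{+-}\,\phi\at{\Xi}}\,,\\
\phi\at{\Xi}&:=-\frac{\tau_{++}\,\mu_+}{\tau_{+-}}\,\bat{\exp\at{2\,\mu_-\,\Xi}-1}+\mu_-\,\bat{\exp\at{2\,\mu_+\,\Xi}-1}\,.
\end{align*}
Hence the theorem is equivalent to the claim that $g$ maps $\oointerval{0}{\infty}$ bijectively onto $\oointerval{0}{\infty}$: then $g\at{\Xi}=\eta$ has a unique root, and $\si=\si\at{\Xi}$ is forced, which also yields uniqueness of the pair $\pair{\Xi}{\si}$.

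\emph{Analysis of $g$ and the main obstacle.} The numerator of $g$ is a nonzero constant which is negative (use $\mu_-<0<\mu_+$, $\mu_--\mu_+<0$, $\tau_{+-}<0$) and has the same sign as $\phi$, so $g>0$ exactly where $\phi<0$, and $g$ is smooth wherever $\phi$ does not vanish. Now $\phi\at{0}=0$ and $\phi\at{\Xi}\to-\infty$ as $\Xi\to\infty$ because the term $\mu_-\exp\at{2\mu_+\Xi}$ dominates. The crux is strict monotonicity: differentiating gives
\[
\phi^\prime\at{\Xi}=2\,\mu_-\,\psi\at{\Xi}\,,\qquad
\psi\at{\Xi}:=-\frac{\tau_{++}\,\mu_+}{\tau_{+-}}\,\exp\at{2\,\mu_-\,\Xi}+\mu_+\,\exp\at{2\,\mu_+\,\Xi}\,,
\]
and one checks that $\psi\at{0}=\mu_+\,\tfrac{\mu_--\mu_+}{\tau_{+-}}>0$ and $\psi^\prime>0$ --- both purely from the sign pattern of \eqref{Rates2} and Lemma~\ref{LemSC} --- so that $\psi>0$ on $\cointerval{0}{\infty}$ and therefore $\phi^\prime=2\mu_-\psi<0$ on $\oointerval{0}{\infty}$ because $\mu_-<0$. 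Consequently $\phi$ decreases strictly from $0$ to $-\infty$, so $g$ decreases strictly from $+\infty$ to $0$, and the asserted existence and uniqueness of $\Xi$ follow. Everything apart from this monotonicity is a reorganization of Theorem~\ref{ThmEx1} and its proof; the single genuinely new ingredient is the positivity of $\psi$, that is, the strict monotonicity of $\phi$ (equivalently of $g$), which is exactly what the sign estimates of Lemma~\ref{LemSC} are tailored to deliver.
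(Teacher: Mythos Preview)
Your proof is correct, but the organization differs from the paper's. The paper does not stay in the $\pair{\sigma}{\Xi}$-parametrization of Theorem~\ref{ThmEx1}; instead it introduces the shifted coefficients $D_\pm:=B_\pm\exp\at{-\mu_\pm\,\Xi}$, so that the two conditions $A_-=0$ and $V\at{-\Xi}=\eta$ become a $2\times2$ linear system in $D_-,D_+$ whose coefficients are \emph{independent of $\Xi$}. That system has the unique solution $D_\pm=D_\pm\at{\eta}$ with the required signs, and only the integral constraint remains: it reads $2\,\ka=g\at{\Xi}$ for a function $g$ satisfying $g\at{0}=0$, $g^\prime\at{0}=2\,\eta>0$ and $g^{\prime\prime}>0$, so existence and uniqueness of $\Xi$ follow from convexity in one line. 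You instead fix $\sigma=\sigma\at{\Xi}$ from $A_-=0$, use the integral constraint via \eqref{ForBP} to determine $B_+$, and leave $V\at{-\Xi}=\eta$ as the scalar equation; this forces you to analyze the quotient $C/\phi$ and to verify $\phi^\prime<0$ through the auxiliary function $\psi$. The two scalar equations are equivalent (your $g\at{\Xi}=\eta$ is the paper's $g\at{\Xi}=2\,\ka$ solved for $\eta$, since the paper's $g$ is a negative constant times your $\phi$), and both monotonicity proofs rest on the same sign pattern from \eqref{Rates2} and Lemma~\ref{LemSC}. The paper's change of variables to $D_\pm$ buys a shorter convexity argument; your route has the merit of working entirely inside the framework already set up in Theorem~\ref{ThmEx1} without introducing new unknowns.
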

%
%
\begin{proof} 
Using the abbreviations
\begin{align}
\label{DefD}
D_-:=B_-\exp\at{-\mu_-\,\Xi}\,,\qquad
D_+:=B_+\exp\at{-\mu_+\,\Xi}\,,
\end{align}
the conditions $V\at{-\Xi}=\eta$ and $A_-=0$ transform into the linear equations
\begin{align}
\notag
D_-+D_+=\eta\,,\qquad \tau_{+-}\,D_-+\tau_{++}\,D_+=0 \,.
\end{align}
The only solution is given by
\begin{align}
\label{SolD}
D_-=\frac{\tau_{++}}{\,\tau_{++}-\tau_{+-}\,}\,\eta<0\,,\qquad 
D_+=\frac{\tau_{+-}}{\,\tau_{+-}-\tau_{++}\,}\,\eta>0\,,
\end{align}
where the signs follow from  $\tau_{++}-\tau_{+-}=\mu_+-\mu_->0$ and Lemma \ref{LemSC}. Moreover, the integral condition in \eqref{TWConstr2} can be written as
\begin{align}
\label{EqnXi}
2\,\ka=g\bat{\Xi}:=D_-\int\limits_{0}^{2\,\Xi}\exp\bat{\mu_-\,Y}\dint{Y}+D_+\int\limits_{0}^{2\,\Xi}\exp\bat{\mu_+\,Y}\dint{Y}\,.
\end{align}
By direct computations we verify
\begin{align*}
g\at{0}=0\,,\qquad g^\prime\at{0}=2\,\eta>0
\end{align*}
as well as
\begin{align*}
g^{\prime\prime}\bat{\Xi}=4\,D_-\,\mu_-\exp\bat{2\,\mu_-\,\Xi}+4\,D_+\,\mu_+\exp\bat{2\,\mu_+\,\Xi}>0\,,
\end{align*}
where we used the signs of $\mu_\pm$ and $D_\pm$ in \eqref{Rates2} and \eqref{SolD}, respectively. Since $g$ is strictly increasing with $\lim_{\Xi\to\infty}g\at\Xi=\infty$, the existence and uniqueness of a  solution $\Xi>0$ follows immediately. Knowing $D_-$, $D_+$, and $\Xi$ we compute 
\begin{align*}
\si=\frac{\,B_-\,}{\,B_+\,}=\frac{\,D_-\,}{D_+}\,\exp\bat{-\at{\mu_+-\mu_-}\,\Xi}=-\frac{\,\tau_{++}\,}{\tau_{+-}}\,\exp\bat{-\at{\mu_+-\mu_-}\,\Xi}
\end{align*}
and this implies $C_->0$ via
\begin{align*}
-\frac{\,B_-\,}{\,B_+\,}<
 \frac{\tau_{++}}{\,\tau_{+-}\,}\,\exp\bat{+\Xi\,\at{\mu_+-\mu_-}}
\end{align*}
thanks to \eqref{Rates2}$+$\eqref{SC2}$+$\eqref{TWCoeff}.
\end{proof}
Similar ODE techniques have been used in \cite{HN23} to compute traveling phase interfaces in a particle model with nonlocal mean-field interaction and inhomogeneities.
%
%
\begin{figure}[ht!]%
\centering{%
\includegraphics[width=0.8\textwidth]{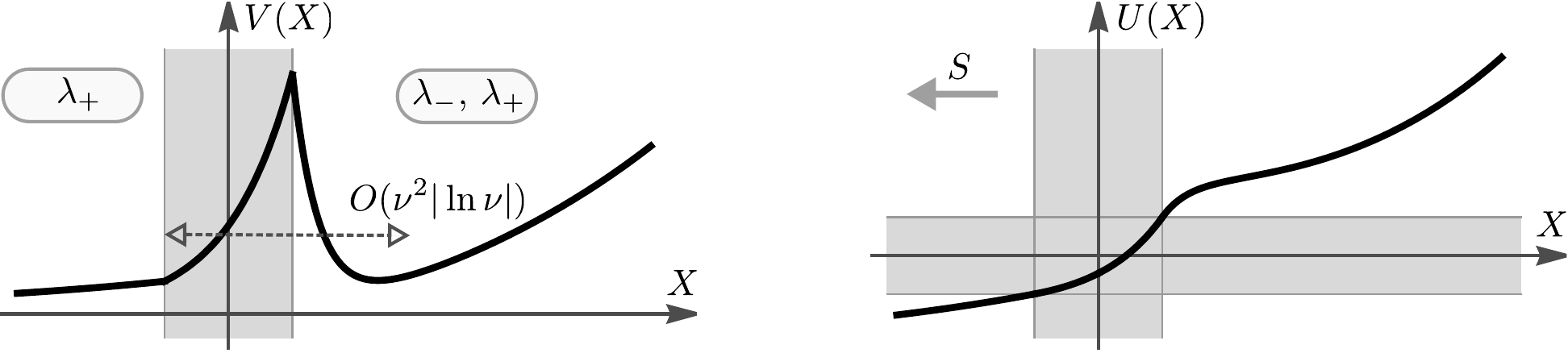}%
}%
\caption{%
Cartoon of the traveling wave from Theorem~\ref{ThmEx2}. The exponential mode corresponding to the rate $\la_-$ appears only in the back of the wave and describes a transition layer that produces a jump of $U$ in the limit of vanishing viscosity. The other waves from Theorem~\ref{ThmEx1} do not converge as $\nu\to0$. 
}%
\label{FigWaves2}%
\end{figure}%
%
%
\begin{figure}[ht!]%
\centering{%
\includegraphics[width=0.99\textwidth]{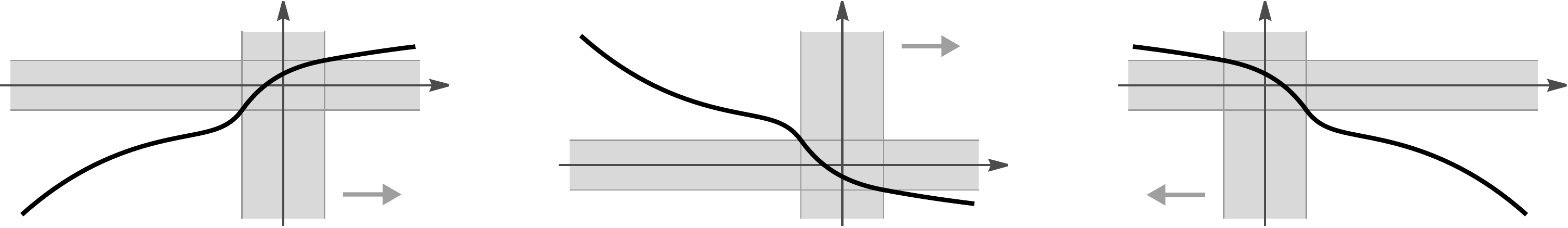}%
}%
\caption{%
Traveling waves also exists with $S<0$ and/or decreasing $U$ and can be related to reflections on the vertical and/or horizontal axis. Here illustrated for the wave from Figure \ref{FigWaves2}.
}%
\label{FigWaves3}%
\end{figure}%
%
%
\section{Convergence of traveling waves}
%
%
In this section we study the limits $\nu\to0$ and $\ka\to0$ and provide simplified formulas for the corresponding traveling waves as illustrated in Figure \ref{FigWaves4}.
%
%
\begin{theorem}[limit of vanishing viscosity]
\label{ThmConv1}
The waves from Theorem~\ref{ThmEx2} satisfy
\begin{align}
\label{AsyXi}
\Xi=\frac{\,\nu^2\,S\,\ka\,}{\,2\,\at{1-\ka}\,}\,\ln\at{\frac{\,2\,\at{1-\ka}^2\,}{\,\nu^2\,S\,\eta\,}}\,\Bat{1+O\at{\nu^2}}\quad\xrightarrow{\;\;\nu\to0\;\;}\quad 0
\end{align}
as well as the pointwise convergence
\begin{align}
\label{ConvU}
U\at{X}\quad \xrightarrow{\;\;\nu\to0\;\;}\quad \left\{
\begin{array}{rc}
-\ka-\eta\,S^{-1}\,\Bat{1-\exp\at{S\,X}}&\text{for $X<0$}\\
2-\ka+\bat{\eta+2\,S}\,S^{-1}\,\Bat{\exp\at{S\,X}-1}&\text{for $X>0$}\\
\end{array}
\right.
\end{align}
provided that the parameters $0<\ka<1$, $S>0$, and $\eta>0$ are chosen independent of $\nu$.
\end{theorem}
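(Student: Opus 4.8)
The plan is to feed the explicit representation of the relevant traveling waves from Theorem~\ref{ThmEx2} into the expansions \eqref{Expansion1}--\eqref{Expansion2}, first pinning down the width $\Xi$ and then passing to the limit $\nu\to0$ in the coefficient formulas \eqref{TWSolV}--\eqref{TWCoeff}.

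\textbf{Step~1 (the width).} I would start from the defining equation \eqref{EqnXi}, i.e.\ $g\at{\Xi}=2\,\ka$, which after evaluating the two elementary integrals becomes
\begin{align*}
2\,\ka = D_-\,\frac{\,\exp\at{2\,\mu_-\,\Xi}-1\,}{\mu_-} + D_+\,\frac{\,\exp\at{2\,\mu_+\,\Xi}-1\,}{\mu_+}\,.
\end{align*}
From \eqref{Expansion2}, \eqref{SolD}, and Lemma~\ref{LemSC} one has, as $\nu\to0$, $\mu_-\to-\tfrac{S\,\ka}{1-\ka}$, $\mu_+=\tfrac{1-\ka}{\nu^2\,S\,\ka}\,\bat{1+O\at{\nu^2}}$, $D_-\to-\tfrac{\ka\,\eta}{1-\ka}$ and $D_+\to\tfrac{\eta}{1-\ka}$. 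Since $\Xi>0$ and $\mu_-<0$, the first summand is negative with modulus at most $2\,\abs{D_-}\,\Xi$, so $\tfrac{D_+}{\mu_+}\bat{\exp\at{2\,\mu_+\,\Xi}-1}$ is sandwiched between $2\,\ka$ and $2\,\ka+2\,\abs{D_-}\,\Xi$; since $\mu_+\to\infty$ while $D_+$ stays bounded away from $0$ this already forces the crude bound $\Xi=O\bat{\nu^2\,\ln(1/\nu)}\to0$. Hence $\mu_-\,\Xi\to0$ and $\exp\at{2\,\mu_-\,\Xi}=1+O\bat{\nu^2\,\ln(1/\nu)}$, so reinserting gives $\exp\at{2\,\mu_+\,\Xi}=\tfrac{2\,(1-\ka)^2}{\nu^2\,S\,\eta}\bat{1+O(\nu^2\ln(1/\nu))}$; taking logarithms, dividing by $2\,\mu_+$ and using the expansion of $\mu_+$ once more produces \eqref{AsyXi}. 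I expect this bootstrap to be the main obstacle, since the remainder in \eqref{AsyXi} is multiplicative $O\at{\nu^2}$ and one must track the cancellation between the large factor $\mu_+\sim\nu^{-2}$ and the small, logarithmically divergent factor $\Xi$ with care.

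\textbf{Step~2 (the outer coefficients).} Next I would extract the limits of $C_-$ and $C_+$ from \eqref{TWCoeff}. Writing $B_\pm=D_\pm\,\exp\at{\mu_\pm\,\Xi}$ as in \eqref{DefD} and using the identity $\tau_{+-}\,D_-+\tau_{++}\,D_+=0$ --- which is precisely the requirement $A_-=0$ built into Theorem~\ref{ThmEx2} --- the bracket defining $C_-$ collapses to $\tau_{+-}\,D_-\bat{\exp\at{2\,\mu_-\,\Xi}-\exp\at{2\,\mu_+\,\Xi}}$. Combining this with Step~1 and with $\tau_{+-}=-\tfrac{1}{\nu^2\,S\,\ka}\bat{1+o(1)}$, $\la_--\la_+=-\tfrac{1}{\nu^2\,S}\bat{1+o(1)}$ from \eqref{Expansion1}, I obtain
\begin{align*}
C_-=\frac{\,2\,(1-\ka)\,}{\nu^2\,S}\,\exp\at{-\la_-\,\Xi}\,\bat{1+o(1)}\,,\qquad C_+=\bat{\eta+2\,S}\,\bat{1+o(1)}\,,
\end{align*}
where $C_+$ is obtained the same way from $\tau_{--}\,D_-\exp\at{2\mu_-\Xi}+\tau_{-+}\,D_+\exp\at{2\mu_+\Xi}$ together with $\tau_{--}=-\tfrac{1-\ka}{\nu^2\,S\,\ka}\bat{1+o(1)}$ and $\tau_{-+}=\tfrac{S}{1-\ka}\bat{1+o(1)}$ (all read off from \eqref{Rates1}, \eqref{Expansion1}--\eqref{Expansion2}). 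Note $C_-$ itself blows up as $\nu\to0$, because $\la_-\,\Xi\to-\infty$; this divergence cancels in the next step.

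\textbf{Step~3 (the profile).} By \eqref{TWConstr1} we have $U\at{\pm\Xi}=\pm\ka$, and since $\Xi\to0$ any fixed $X\neq0$ eventually lies in $I_-$ (if $X<0$) or in $I_+$ (if $X>0$). For $X<0$, integrating $V\at{X}=\eta\,\exp\bat{\la_+(X+\Xi)}$ from $X$ to $-\Xi$ gives $U\at{X}=-\ka-\tfrac{\eta}{\la_+}\bat{1-\exp\at{\la_+(X+\Xi)}}$, which converges to $-\ka-\eta\,S^{-1}\bat{1-\exp\at{SX}}$ because $\la_+\to S$ and $\Xi\to0$; this is the first line of \eqref{ConvU}. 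For $X>0$, integrating \eqref{TWSolV} on $I_+$ from $\Xi$ to $X$ gives
\begin{align*}
U\at{X}=\ka+\frac{C_-}{\la_-}\bat{\exp\at{\la_-X}-\exp\at{\la_-\Xi}}+\frac{C_+}{\la_+}\bat{\exp\at{\la_+X}-\exp\at{\la_+\Xi}}\,.
\end{align*}
Here the factors $\exp\at{-\la_-\Xi}$ inside $C_-$ and $\exp\at{\la_-\Xi}$ cancel, so by Step~2 $\tfrac{C_-}{\la_-}\exp\at{\la_-\Xi}\to-2\,(1-\ka)$, while $\tfrac{C_-}{\la_-}\exp\at{\la_-X}=\tfrac{C_-}{\la_-}\exp\at{\la_-\Xi}\cdot\exp\at{\la_-(X-\Xi)}\to0$ since $\la_-(X-\Xi)\to-\infty$; and the last term tends to $\tfrac{\eta+2S}{S}\bat{\exp\at{SX}-1}$. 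Summing, $U\at{X}\to\ka+2\,(1-\ka)+\tfrac{\eta+2S}{S}\bat{\exp\at{SX}-1}=2-\ka+\bat{\eta+2\,S}\,S^{-1}\bat{\exp\at{SX}-1}$, the second line of \eqref{ConvU}; the convergence $\Xi\to0$ is already contained in \eqref{AsyXi}.
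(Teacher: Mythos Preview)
Your proof is correct and follows essentially the same route as the paper: expand $\mu_\pm$, $\la_\pm$, $\tau_{\pm\pm}$, $D_\pm$ for small $\nu$, bootstrap the equation $g(\Xi)=2\ka$ to obtain \eqref{AsyXi}, and then read off the limits of the outer coefficients $C_\pm$. The only presentational difference is in Step~3 for $X>0$: the paper phrases the contribution of $C_-\exp(\la_-X)$ and of the spinodal piece as convergence of $V$ to Dirac masses $2(1-\ka)\,\delta_0$ and $2\ka\,\delta_0$, respectively, and then integrates; you instead integrate $V$ explicitly on $I_+$ from $\Xi$ to $X$ and observe the cancellation $\exp(-\la_-\Xi)\cdot\exp(\la_-\Xi)=1$ inside $\tfrac{C_-}{\la_-}\exp(\la_-\Xi)$. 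These are the same computation viewed two ways; your version is slightly more elementary since it avoids the measure language, while the paper's version makes the layer structure of $V$ (two concentrating pieces of total mass $2$) more visible.
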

%
%
\begin{proof}
\emph{\ul{Preliminaries}}\,: Formula \eqref{Expansion1} ensures
\begin{align}
\label{AsyAux0}
\la_+-\la_-=\frac{1}{\,\nu^2\,S\,}\,\Bat{1+O\at{\nu^2}}
\end{align}
and for the auxiliary quantities from Lemma \ref{LemSC} we obtain
\begin{align}
\label{AsyAux1}
\tau_{--}=-\frac{1-\ka}{\,\nu^2\,S\,\ka\,}\,\Bat{1+O\at{\nu^2}}\,,\qquad
\tau_{-+}=\frac{S}{\,1-\ka\,}\,\Bat{1+O\at{\nu^2}}
\end{align}
as well as
\begin{align}
\label{AsyAux2}
\tau_{+-}=-\frac{1}{\,\nu^2\,S\,\ka\,}\,\Bat{1+O\at{\nu^2}}\,,\qquad
\tau_{++}=-\frac{1}{\,\nu^2\,S\,}\,\Bat{1+O\at{\nu^2}}
\end{align}
by straight forward Taylor expansions. Inserting this into \eqref{SolD} gives
\begin{align}
\label{AsyD}
D_-=-\frac{\ka\,\eta}{\,1-\ka\,}\,\Bat{1+O\at{\nu^2}}\,,\qquad D_+=+\frac{\eta}{\,1-\ka\,}\,\Bat{1+O\at{\nu^2}}
\end{align}
for the modified coefficients from \eqref{DefD}\,.
\par\emph{\ul{Asymptotics of interface width}}\,:
The equation for $\Xi$ in \eqref{EqnXi} can be written as 
\begin{align*}
\exp\bat{2\,\mu_+\,\Xi}=1+\frac{\,2\,\ka\,\mu_+\,}{D_+}-\frac{D_-\,\mu_+}{D_+\,\mu_-}\,\Bat{\exp\at{2\,\mu_-\,\Xi}-1}\,,
\end{align*}
where $\mu_-$ and $\mu_+$ exhibit different asymptotic properties according to \eqref{Expansion2}. Using this and \eqref{AsyD} we conclude that $\mu_+\,\Xi$ is large but still of order $\abs{\ln\nu}$ while $\Xi$ is small and of order $\nu^2\abs{\ln\nu}$. In particular, we get
\begin{align}
\label{AsyExp1}
\exp\bat{2\,\mu_-\,\Xi}=1+O\bat{\nu^2\nabs{\ln\nu}}\,,\qquad
\exp\bat{\la_+\,\Xi}=1+O\bat{\nu^2\nabs{\ln\nu}}
\end{align}
as well as
\begin{align}
\label{AsyExp2}
\exp\bat{2\,\mu_+\,\Xi}=\frac{\,2\,\at{1-\ka}^2\,}{\nu^2\,S\,\eta}\Bat{1+O\at{\nu^2\nabs{\ln\nu}}}\,.
\end{align}
The asymptotic solution formula \eqref{AsyXi} now follows by 
taking the logarithm and inserting \eqref{Expansion2}.
\par\emph{\ul{Limit of  $\,V\!$ on $I_-$ and $I_0$}}\,:
The asymptotics of $\la_+$ and $\Xi$ in \eqref{Expansion1} and \eqref{AsyXi}, respectively, guarantee the pointwise convergence
\begin{align*}
V\at{X}=A_+\,\exp\at{\la_+\,X}\quad \xrightarrow{\;\;\nu\to0\;\;}\quad \eta\,\exp\at{S\,X}
\end{align*}
for any $X<0$ thanks to $A_+=\eta\,\exp\at{+\la_+\,\Xi}$. Moreover,  the integral constraint in  \eqref{TWConstr2} implies that
\begin{align*}
\chi_{I_0}\at{X}\,V\at{X}\quad \xrightarrow{\;\;\nu\to0\;\;}\quad 2\,\ka\,\delta_0\at{X}
\end{align*}
holds in the sense of measures, where $\chi_{I_0}$ denotes the characteristic function of the interval $I_0$ from \eqref{Intervals} and $\delta_0$ is the Dirac distribution centered in $X=0$.
\par\emph{\ul{Limit of $\,V$on $I_+$}}\,:
Combining \eqref{TWCoeff} and \eqref{DefD} with the asymptotic results \eqref{AsyAux0}, \eqref{AsyAux1}, \eqref{AsyD}, \eqref{AsyExp1}, and \eqref{AsyExp2}
provides 
\begin{align*}
C_{+}&=\Bat{\tau_{--}\, D_-\,\exp\at{2\,\mu_-\,\Xi}+\tau_{-+}\, D_+\,\exp\at{2\,\mu_+\,\Xi}}\,\frac{\,\exp\at{-\la_+\,\Xi}\,}{\la_+-\la_-}
\\&=
\at{
\frac{1-\ka}{\,\nu^2\,S\,\ka\,}\,\frac{\,\ka\,\eta\,}{\,1-\ka\,}
+
\frac{S}{\,1-\ka\,}\,\frac{\eta}{\,1-\ka\,}\,\frac{\,2\,\at{1-\ka}^2\,}{\nu^2\,S\,\eta}}\,\nu^2\,S\,\Bat{1+O\at{\nu^2\nabs{\ln\nu}}}
\\&=\bat{\eta+2\,S}\,\Bat{1+O\at{\nu^2\nabs{\ln\nu}}}\,,
\end{align*}
so the pointwise convergence
\begin{align*}
C_+\,\exp\at{\la_+\,X}\quad \xrightarrow{\;\;\nu\to0\;\;}\quad \bat{\eta+2\,S}\exp\at{S\,X}
\end{align*}
holds for any fixed $X>0$. Using similar arguments as well as \eqref{AsyAux2} we derive
\begin{align*}
C_-\,\exp\at{\la_-\,X}\quad \xrightarrow{\;\;\nu\to0\;\;}\quad 0
\end{align*}
due to the singular asymptotics of $\la_-$ in \eqref{Expansion1} but at the same time we have
\begin{align*}
\int\limits_{+\Xi}^{+\infty} C_-\,\exp\at{\la_-\,X}\dint{X}
&=%
-\frac{C_-}{\la_-}\,\exp\at{\la_-\,\Xi}
\\&=%
\Bat{\tau_{+-}\, D_-\,\exp\at{2\,\mu_-\,\Xi}+\tau_{++}\, D_+\,\exp\at{2\,\mu_+\,\Xi}}\,\frac{1}{\,\la_-\,\at{\la_+-\la_-}\,}
\\&=%
\at{\frac{1}{\,\nu^2\,S\,\ka\,}\,\frac{\ka\,\eta}{\,1-\ka\,}-\frac{1}{\,\nu^2\,S\,}\,\frac{\eta}{\,1-\ka\,}\,\frac{\,2\,\at{1-\ka}^2\,}{\nu^2\,S\,\eta}}\,\bat{-\nu^4\,S^2}
\,\Bat{1+O\at{\nu^2\nabs{\ln\nu}}}
\\&=2\,\at{1-\ka}\,\Bat{1+O\at{\nu^2\nabs{\ln\nu}}}\,.
\end{align*}
We thus obtain
\begin{align*}
\chi_{I_+}\at{X}\,C_-\,\exp\at{\la_-\,X}
\quad \xrightarrow{\;\;\nu\to0\;\;}\quad 2\,\at{1-\ka}\,\delta_0\at{X}
\end{align*}
in the sense of measures. 
\par\emph{\ul{Limits of $\,V\!$ and $U$}}\,:
Our results derived so far guarantee
\begin{align*}
V\at{X}\quad \xrightarrow{\;\;\nu\to0\;\;}\quad 2\,\delta_0\at{X}+
\left\{
\begin{array}{rc}
\eta\,\exp\at{S\,X}&\text{for $X<0$}\\
\bat{\eta+2\,S}\,\exp\at{S\,X}&\text{for $X>0$}\\
\end{array}
\right.
\end{align*}
in the sense of measures as well as
\begin{align*}
U\at{X}=-\ka + \int\limits_{X}^{-\Xi} V\at{Y}\dint{Y}
\quad\xrightarrow{\;\;\nu\to0\;\;}\quad
-\ka + \int\limits_{X}^{0} \eta\,\exp\at{S\,X}\dint{Y}=
-\ka+\eta\,S^{-1}\bat{\exp\at{S\,X}-1}
\end{align*}
for any fixed $X<0$. Both results imply \eqref{ConvU} and the proof is complete.
\end{proof}
%
%
\begin{figure}[ht!]%
\centering{%
\includegraphics[width=0.8\textwidth]{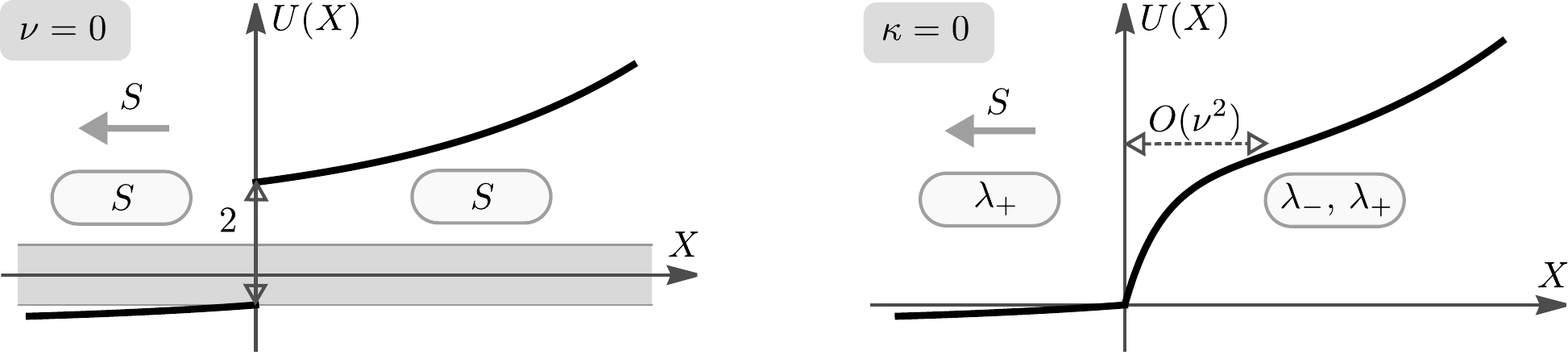}%
}%
\caption{%
Cartoon of the limiting wave profiles $U$ from Theorem \ref{ThmConv1} (\emph{left panel}) and Theorem \ref{ThmConv2} (\emph{right panel}). For the latter, the contributions with rate $\la_-$ describe a smooth transition layer with width of order $\nu^2$ that replaces the jump of height $2$.
}%
\label{FigWaves4}%
\end{figure}%
%
%
\paragraph{Compatibility with the hysteretic limit model} 
We show that the limit wave in Theorem~\ref{ThmConv1} describes a left-moving single-interface solution to the ill-posed diffusion equation that is compatible with the hysteretic limit model from \S\ref{secIntro}. In fact, with 
\begin{align*}
P\at{X}=U\at{X}-\sgn_\ka\bat{U\at{X}}=U\at{X}-\sgn\at{X}
\end{align*}
we find
\begin{align*}
\bjump{U}_{X=0}=2\,,\qquad \bjump{P}_{X=0}=0\,,\qquad
\bjump{\tfrac{\dint}{\dint{X}}P}_{X=0}=2\,S
\end{align*}
as well as
\begin{align*}
S\,\tfrac{\dint}{\dint{X}}U\at{X}=S\,V\at{X}=\tfrac{\dint}{\dint{X}}V\at{X}=\tfrac{\dint^2}{\dint{X^2}}U\at{X}=\tfrac{\dint^2}{\dint{X^2}}P\at{X}\qquad \text{for}\qquad X\neq0
\end{align*}
and conclude that the function $u$ from \eqref{TWAnsatz} satisfies the single-interface condition \eqref{singInt} with $\xi\at{t}=-S\,t$ as well as the bulk diffusion \eqref{lmBD} and the Stefan condition \eqref{lmSC}. Moreover, we have
\begin{align*}
S>0\,,\qquad P\at{0}=+1-\ka
\end{align*}
in accordance with the flow rule \eqref{lmFR}.
\bigpar
As our last result, we investigate the limit as $\kappa \to 0$ in which $\Phi^\prime$ becomes bilinear. In this special case we already find sharp interfaces even for $\nu > 0$ since the spinodal region consists only of the isolated point $U = 0$. The corresponding sharp-interface model for $\nu\to0$, however, is basically the same as for $\ka>0$ and also exhibits strong hysteresis. We further mention that the existence of traveling wave solutions of the viscous PDE \eqref{PDE} with bilinear $\Phi^\prime$ has already been proven directly in \cite{Jan23}, and we recover the same limit formula. 
%
%
\begin{theorem}[bilinear limiting case]
\label{ThmConv2}
Let $\nu>0$, $S>0$, and $\eta>0$ be fixed. Then we have
\begin{align}
\label{X3}
\Xi=\frac{\,\nu^2\,S\,\ka\,}{2}\,\ln\at{1+\frac{2}{\,\nu^2\,S\,\eta\,}}\,\Bat{1+O\at{\ka}}\quad\xrightarrow{\;\;\ka\to0\;\;}\quad 0
\end{align}
and
\begin{align}
\notag
U\at{X}\quad\xrightarrow{\;\;\ka\to0\;\;}\quad \frac{\eta}{\,\la_+\,}\,\Bat{\exp\at{\la_+\,X}-1}\,+\,\frac{2}{\,\sqrt{1+4\,\nu^2\,S^2\,}\,}\left\{\begin{array}{cc}
0&\text{for $X<0$}\\\!\exp\at{\la_+\,X}-\exp\at{\la_-\,X}&\text{for $X\geq 0$}\end{array}\right.
\end{align}
for the traveling waves from Theorem~\ref{ThmEx2}.
\end{theorem}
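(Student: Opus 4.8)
The plan is to run the argument of Theorem~\ref{ThmConv1} with the roles of the two small parameters interchanged: $\nu$ is now frozen and $\ka\to0$. The first observation is that the rates $\la_\pm$ in \eqref{Rates1} do not depend on $\ka$ and obey the elementary identities
\begin{align*}
\la_++\la_-=-\frac{1}{\,\nu^2\,S\,}\,,\qquad \la_+\,\la_-=-\frac{1}{\,\nu^2\,}\,,\qquad \la_+-\la_-=\frac{1}{\,\nu^2\,S\,}\,\sqrt{1+4\,\nu^2\,S^2\,}\,,
\end{align*}
the last of which is responsible for the prefactor $\sqrt{1+4\,\nu^2\,S^2\,}$ in the claimed limit. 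The rates $\mu_\pm$, by contrast, degenerate: writing $\theta_\ka:=\tfrac{1-\ka}{\,2\,\nu^2\,S\,\ka\,}\to\infty$, a Taylor expansion gives $\mu_+=2\,\theta_\ka+\DO{\ka}=\tfrac{1-\ka}{\,\nu^2\,S\,\ka\,}+\DO{\ka}$ and $\mu_-=-\tfrac{1}{\,2\,\nu^2\,\theta_\ka\,}+\DO{\ka^3}=-\tfrac{\,S\,\ka\,}{\,1-\ka\,}+\DO{\ka^3}$, so $\mu_-\to0$. Feeding these into the definitions from Lemma~\ref{LemSC} and using $\mu_+-\tfrac{1}{\,\nu^2\,S\,\ka\,}=-\tfrac{1}{\,\nu^2\,S\,}+\DO{\ka}=\la_++\la_-+\DO{\ka}$, I obtain $\tau_{++}=\la_-+\DO{\ka}$, $\tau_{-+}=\la_++\DO{\ka}$, $\tau_{+-}=\tau_{--}=-\tfrac{1}{\,\nu^2\,S\,\ka\,}+\DO{1}$ and $\tau_{++}-\tau_{+-}=\mu_+-\mu_-=\tfrac{1}{\,\nu^2\,S\,\ka\,}\bat{1+\DO{\ka}}$. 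Inserting this into \eqref{SolD} gives $D_-=\la_-\,\nu^2\,S\,\ka\,\eta\,\bat{1+\DO{\ka}}=\DO{\ka}$ and $D_+=\eta\,\bat{1+\DO{\ka}}$.

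Next I would establish the formula \eqref{X3} for $\Xi$. Convexity of the map $g$ from \eqref{EqnXi} together with $g\at{0}=0$ and $g^\prime\at{0}=2\,\eta$ (all shown in the proof of Theorem~\ref{ThmEx2}) yields $g\at{\Xi}\ge2\,\eta\,\Xi$, hence $\Xi\le\ka/\eta\to0$ and in particular $\mu_-\,\Xi=\DO{\ka^2}$ and $\exp\at{\la_\pm\,\Xi}\to1$. Rewriting \eqref{EqnXi} in the form
\begin{align*}
\exp\bat{2\,\mu_+\,\Xi}=1+\frac{\,2\,\ka\,\mu_+\,}{D_+}-\frac{D_-\,\mu_+}{\,D_+\,\mu_-\,}\,\Bat{\exp\bat{2\,\mu_-\,\Xi}-1}
\end{align*}
and using the expansions above, the last term is $\DO{\ka}$ while $\tfrac{\,2\,\ka\,\mu_+\,}{D_+}=\tfrac{2}{\,\nu^2\,S\,\eta\,}+\DO{\ka}$, so that $\exp\bat{2\,\mu_+\,\Xi}=1+\tfrac{2}{\,\nu^2\,S\,\eta\,}+\DO{\ka}$. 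Taking logarithms and inserting $\mu_+=\tfrac{1-\ka}{\,\nu^2\,S\,\ka\,}\bat{1+\DO{\ka}}$ gives \eqref{X3}; this also pins down $\mu_+\,\Xi\to\tfrac12\ln\bat{1+\tfrac{2}{\,\nu^2\,S\,\eta\,}}$.

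With these preparations the limit of $U$ is read off interval by interval, exactly as in Theorem~\ref{ThmConv1}. On $I_-$ one has $V\at{X}=A_+\exp\at{\la_+\,X}$ with $A_+=\eta\exp\at{\la_+\,\Xi}\to\eta$, hence $V\at{X}\to\eta\exp\at{\la_+\,X}$ for every fixed $X<0$; on $I_0$ the relations $\int_{-\Xi}^{+\Xi}V=2\,\ka\to0$ and $\abs{I_0}=2\,\Xi\to0$ make the interface contribution disappear. On $I_+$ I compute $C_\pm$ from \eqref{TWCoeff} and \eqref{DefD}, using $B_\pm\exp\at{\mu_\pm\,\Xi}=D_\pm\exp\bat{2\,\mu_\pm\,\Xi}$ together with the asymptotics above, and obtain
\begin{align*}
C_-\xrightarrow{\;\;\ka\to0\;\;}-\frac{2\,\la_-}{\,\sqrt{1+4\,\nu^2\,S^2\,}\,}\,,\qquad C_+\xrightarrow{\;\;\ka\to0\;\;}\eta+\frac{2\,\la_+}{\,\sqrt{1+4\,\nu^2\,S^2\,}\,}\,.
\end{align*}
Integrating $V$ away from the shrinking interface, with $U\at{X}=-\ka-\int_X^{-\Xi}V$ for $X<-\Xi$, $U\at{X}=\ka+\int_{\Xi}^{X}V$ for $X>\Xi$, and $U\at{\mp\Xi}=\mp\ka\to0$, and collecting the constant terms produces the asserted limit formula for $U$, with the $\la_+$-part coming from both $I_-$ and $I_+$ and the $\la_-$-part only from $I_+$.

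The step I expect to be the main obstacle is the limit of $C_-$. To leading order the summand $\tau_{+-}\,D_-\exp\bat{2\,\mu_-\,\Xi}$ equals $-\,\la_-\,\eta+\DO{\ka}$, while the summand $\tau_{++}\,D_+\exp\bat{2\,\mu_+\,\Xi}$, with $\exp\bat{2\,\mu_+\,\Xi}$ replaced by its limit $1$, equals $\la_-\,\eta+\DO{\ka}$; these leading parts cancel, so the finite limit $\tfrac{2\,\la_-}{\,\nu^2\,S\,}$ of their sum arises \emph{entirely} from the $\DO{1}$ excess $\exp\bat{2\,\mu_+\,\Xi}-1\to\tfrac{2}{\,\nu^2\,S\,\eta\,}$ --- precisely the delicate bookkeeping that already governs the vanishing-viscosity analysis. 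One therefore has to keep careful track of which exponential is sent to its limit $1$ and which carries the $\DO{1}$ correction, and to verify that the relative errors $\DO{\ka}$ remain negligible against it; the corresponding computation for $C_+$ is analogous but involves no such cancellation.
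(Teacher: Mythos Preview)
Your proposal is correct and follows essentially the same route as the paper's own proof: expand $\mu_\pm$, $\tau_{\pm\pm}$, $D_\pm$ in $\ka$, bound $\Xi$ a priori, solve \eqref{EqnXi} asymptotically for $\exp\bat{2\,\mu_+\,\Xi}$, and then read off $C_\pm$ from \eqref{TWCoeff}. Two small remarks. First, your a priori bound $\Xi\le\ka/\eta$ via the convexity of $g$ is slightly slicker than the paper's, which instead bounds $2\,\mu_+\,\Xi$ directly from the rewritten integral identity. Second, the ``delicate bookkeeping'' you flag for $C_-$ is in fact simpler than you anticipate: the leading parts do not merely cancel to order $\ka$, they cancel \emph{exactly}, since $\tau_{+-}\,D_-+\tau_{++}\,D_+=0$ is precisely the condition $A_-=0$ built into the family of Theorem~\ref{ThmEx2} (see \eqref{SolD}). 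The paper exploits this by grouping $C_-$ as $\bat{\tau_{+-}D_-+\tau_{++}D_+}+\tau_{++}D_+\bat{\exp\nat{2\mu_+\Xi}-1}$, so only the second summand survives and no error control on the first is needed.
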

%
\begin{proof}
\emph{\ul{Elementary asymptotics}}\,: 
From \eqref{Rates1} we deduce that
\begin{align}
\label{X1}
\mu_-=-S\,\ka\,\Bat{1+O\at{\ka}}\,,\qquad \mu_+=\frac{1}{\,\nu^2\,S\,\ka\,}\,\Bat{1+O\at{\ka}}
\end{align}
and by direct computations we verify
\begin{align*}
\tau_{--}=-\frac{1}{\,\nu^2\,S\,\ka\,}\,\Bat{1+O\at{\ka}}\,,\qquad
\tau_{-+}=\la_+\,\Bat{1+O\at{\ka}}\,\qquad
\end{align*}
as well as
\begin{align*}
\tau_{+-}=-\frac{1}{\,\nu^2\,S\,\ka\,}\,\Bat{1+O\at{\ka}}\,,\qquad
\tau_{++}=\la_-\,\Bat{1+O\at{\ka}}\,.
\end{align*}
Moreover, in view of \eqref{DefD} we get
\begin{align}
\label{X2}
D_-=\la_-\,\nu^2\,S\,\ka\,\eta\,\Bat{1+O\at{\ka}}\,,\qquad\, D_+=\eta\,\Bat{1+O\at{\ka}}\,,
\end{align}
so the formulas
\begin{align}
\label{X4}
\tau_{--}\,D_-=-\la_-\,\eta\,\Bat{1+O\at{\ka}}\,,\qquad \tau_{-+}\,D_+=+\la_+\,\eta\,\Bat{1+O\at{\ka}}
\end{align}
and
\begin{align}
\label{X5}
\tau_{+-}\,D_-=-\la_-\,\eta\,\Bat{1+O\at{\ka}}\,,\qquad \tau_{++}\,D_+=+\la_-\,\eta\,\Bat{1+O\at{\ka}}
\end{align}
follow immediately.
\par\emph{\ul{Effective formulas for the interface width}}\,: 
Equation \eqref{EqnXi} can be written as 
\begin{align}
\label{X9}
2=\frac{D_-}{\,\ka\,\mu_-\,}\,\bat{\exp\at{2\,\mu_-\,\Xi}-1}+\frac{D_-}{\,\ka\,\mu_+\,}\,\bat{\exp\at{2\,\mu_+\,\Xi}-1}
\end{align}
and implies 
\begin{align}
\notag
2\,\mu_+\,\Xi\leq 1+\ln\at{\frac{\,2\,\ka\,\mu_+\,}{D_+}}
\end{align}
because both $D_-/\mu_-$ and $D_+/\mu_+$ are positive due to \eqref{Rates2}+\eqref{SolD}.  Since the right hand side is uniformly bounded for $\ka\to0$ according to \eqref{X1} and \eqref{X2}, we infer that $\Xi$ is of order $O\at{\ka}$. Using this and \eqref{X1}$+$\eqref{X2}$+$\eqref{X9} once again we obtain
\begin{align}
\label{X6}
\exp\at{2\,\mu_-\,\Xi}=1+O\at{\ka^2}\,,\qquad 
\exp\at{2\,\mu_+\,\Xi}=1+\frac{2}{\,\nu^2\,S\,\eta\,}+O\at{\ka}
\end{align}
and this implies in combination with the expansion for $\mu_+$ the claim \eqref{X3}.
\par\emph{\ul{Asymptotics of the wave coefficients}}\,: 
The expansions \eqref{X4}$+$\eqref{X5} and \eqref{X6} provide
\begin{align*}
C_-&=\Bat{\bat{\tau_{+-}\, D_-+\tau_{++}\,D_+}+\tau_{++}\,D_+\bat{\exp\nat{2\,\mu_+\,\Xi}-1}}\,\frac{1}{\,\la_--\la_+\,}\,\Bat{1+O\at{\ka}}
\\&=\Bat{0+\la_-\,\eta\,\frac{2}{\,\nu^2\,S\,\eta\,}}\,\frac{1}{\,\la_--\la_+\,}
\,\Bat{1+O\at{\ka}}\\&=-\frac{\la_-}{\,\la_+-\la_-\,}
\,\frac{2}{\,\nu^2\,S\,}+O\at{\ka}
\end{align*}
and
\begin{align*}
C_+&=\Bat{\bat{\tau_{--}\, D_-+\tau_{-+}\,D_+}+\tau_{-+}\,D_+\bat{\exp\nat{2\,\mu_+\,\Xi}-1}}\,\frac{1}{\,\la_+-\la_-\,}\,\Bat{1+O\at{\ka}}
\\&=\Bat{\at{\la_+-\la_-}\,\eta+\la_+\,\eta\,\frac{2}{\,\nu^2\,S\,\eta\,}}\,\frac{1}{\,\la_+-\la_-\,}
\,\Bat{1+O\at{\ka}}
\\&=\eta+\frac{2}{\,\nu^2\,S\,}\,\frac{\la_+}{\,\la_+-\la_-\,}
\,+O\at{\ka}\,.
\end{align*}
Combining this with $A_+=\eta\,\exp\at{+\la_+\,\Xi}=\eta+O\at{\ka}$ we get
\begin{align*}
V\at{X}\quad\xrightarrow{\;\;\ka\to0\;\;}\quad\eta\,\exp\at{\la_+\,X}\,+\,\frac{2}{\,\nu^2\,S\,\at{\la_+-\la_-}\,}\left\{\begin{array}{cc}
0&\text{for $X<0$}\\\la_+\exp\at{\la_+\,X}-\la_-\exp\at{\la_-\,X}&\text{for $X\geq 0$}\end{array}\right.
\end{align*}
and the second claim follows since we also have
\begin{align*}
U\at{X}=-\ka + \int\limits_{X}^{-\Xi} V\at{Y}\dint{Y}
\quad\xrightarrow{\;\;\ka\to0\;\;}\quad
 \int\limits_{X}^{0} \eta\,\exp\at{\la_+\,X}\dint{Y}=
\eta\,\la_+^{-1}\,\Bat{\exp\at{\la_+\,X}-1}
\end{align*}
for any fixed $X<0$.
\end{proof}
The limit formula in Theorem \ref{ThmConv2} covers all distributional solutions to the linear but inhomogeneous ODE
\begin{align*}
S\tfrac{\dint}{\dint{X}}U\at{X}-\nu^2\,S\tfrac{\dint^3}{\dint{X^3}}U\at{X}=\tfrac{\dint^2}{\dint{X^2}}\Bat{U\at{X}-\sgn\bat{X}}
\end{align*}
and has already been given in \cite{Jan23}. 
\appendix
%
\subsection*{Acknowledgments}
%
\addcontentsline{toc}{section}{Acknowledgments}
This work has been supported by the German Research Foundation (DFG) by the individual grant HE 6853/3-1.
%
%
\subsection*{List of symbols}
%
\addcontentsline{toc}{section}{List of symbols}
\begin{flushleft}
\small
\begin{tabular}{clclcl}
&$\nu$ and $\ka$&&viscosity and parameter for the bistable nonlinearity&&
\eqref{PDE}, \eqref{trilin}
\\%
&$U$ and $V$&&traveling wave profile and its derivative&&\eqref{TWODE1}, \eqref{TWODE2}
\\%
&$X$ and $S$&&coordinate in the coming frame and negative wave speed &&\eqref{TWAnsatz}
\\%
&$\Xi$&& interphase parameter for traveling waves&&\eqref{TWConstr1}
\\%
&$I_-,\,I_0,\,I_+$&&different intervals representing the phases and the interface &&\eqref{Intervals}
\\%
&$\la_\pm$ and $\mu_\pm$&&exponential rates depending on $\nu$, $\ka$, and $S$&&\eqref{Rates1}
\\%
&$\tau_{-\pm}$ and $\tau_{+\pm}$ &&auxiliary quantities related to $\la_\pm$ and $\mu_\pm$&& Lemma \ref{LemSC}
\\%
&$A_\pm,\,B_\pm,\,C_\pm$&&ODE coefficients for $V$ in the different intervals&&\eqref{TWCoeff}
\\%
&$D_\pm$&&modified coefficients inside the spinodal region&&\eqref{DefD}
\\%
&$\eta$ and $\sigma$&&additional parameters for traveling waves
&&Theorems \ref{ThmEx1} and \ref{ThmEx2}
\\%
&$t$, $x$, $\xi_\pm$, $u$, $p$&&variables in the underlying PDE&&Section \ref{secIntro}
\end{tabular}
\end{flushleft}
%
%
%
%
%
\addcontentsline{toc}{section}{References}
\bibliographystyle{alpha}
\bibliography{paper}
\end{document}